\providecommand{\U}[1]{\protect\rule{.1in}{.1in}}
\newtheorem{thm}{Theorem}[section]
\newtheorem{lemma}[thm]{Lemma}
\newtheorem{cor}[thm]{Corollary}
\newtheorem{prop}[thm]{Proposition}
\newtheorem{defn}[thm]{Definition}
\newtheorem{example}[thm]{Example}
\newtheorem{nota}[thm]{Notation}
\numberwithin{equation}{section}
\begin{document}
\title[Multi-state Canalyzing Functions over Finite Fields]{Multi-state Canalyzing Functions over Finite Fields}
\author[Yuan Li, David Murrugarra, John O. Adeyeye , Reinhard Laubenbacher]{Yuan Li$^{1}$, David Murrugarra$^{2}$, John O. Adeyeye $^{3\ast}$, Reinhard
Laubenbacher$^{4}$}
\address{{\small $^{1}$Department of Mathematics, Winston-Salem State University, NC
27110,USA}\\
{\small email: liyu@wssu.edu }\\
{\small $^{2}$Virginia Bioinformatics Institute, Virginia Tech, Blacksburg, VA
24061}\\
USA \\
{\small email: davidmur@vt.edu}\\
$^{3}$Department of Mathematics, Winston-Salem State University, NC 27110,USA,
{\small email: adeyeyej@wssu.edu}\\
{\small $^{4}$Virginia Bioinformatics Institute, Virginia Tech, Blacksburg, VA
24061,USA }\\
{\small email: reinhard@vbi.vt.edu}}
\thanks{$^{\ast}$ Supported by WSSU Research Initiation Program Award, 2010}
\keywords{Canalyzing Function, Finite Fields, Inclusion and Exclusion Principle. \quad MSC:}
\date{}

\begin{abstract}
In this paper, we extend the definition of Boolean canalyzing functions to the
canalyzing functions over finite field $\mathbb{F}_{q}$, where $q$ is a power
of a prime. We obtain the characterization of all the eight classes of such
functions as well as their cardinality. When $q=2$, we obtain a combinatorial
identity by equating our result to the formula in \cite{Win}. Finally, for a
better understanding to the magnitude, we obtain the asymptotes for all the
eight cardinalities as either $n\to\infty$ or $q\to\infty$.

\end{abstract}
\maketitle

\section{Introduction}

\label{sec-intro}

In 1993, canalyzing Boolean rules were introduced by S. Kauffman \cite{Kau1}
as biologically appropriate rules in Boolean network models of gene regulatory
networks. When comparing the class of canalyzing functions to other classes of
functions with respect to their evolutionary plausibility as emergent control
rules in genetic regulatory system, it is informative to know the number of
canalyzing functions with a given number of input variables \cite{Win}.
However, the Boolean network modeling paradigm is rather restrictive, with its
limit to two possible functional levels, ON and OFF, for genes, proteins, etc.
Many discrete models of biological networks therefore allow variables to take
on multiple states. Commonly used discrete multi-state model types are the
so-called logical models \cite{ThomasD'Ari}, Petri nets \cite{Steggles}, and
agent-based models \cite{Pogson}. It was shown in \cite{Veliz-Cuba} and
\cite{Hinkelmann} that many of these models can be translated into the rich
and general mathematical framework of {\emph{polynomial dynamical systems over
a finite field}} ${\mathbb{F}_{q}}$. (Software to carry out this translation
is available at \url{http://dvd.vbi.vt.edu/cgi-bin/git/adam.pl}).

In this paper, we generalize the concept of Boolean canalyzing rules to the
multi-state case, that is, to functions over any finite fields $\mathbb{F}%
_{q}$, thus generalizing the results in \cite{Win}. We provide formulas for
the cardinalities of all the eight classes canalyzing functions. We also
obtain the asymptotes of these cardinalities as either $n\rightarrow\infty$ or
$q\rightarrow\infty$.


\section{Preliminaries}

\label{2} In this section we introduce the definition of a \emph{canalyzing
function}. Let $\mathbb{F}=\mathbb{F}_{q}$ be a finite field with $q$
elements, where $q$ is a power of a prime. If $f$ is a $n$ variable function
from $\mathbb{F}^{n}$ to $\mathbb{F}$, it is well known \cite{Lid} that $f$
can be expressed as a polynomial, called the algebraic normal form (ANF):
\[
f(x_{1},x_{2},\ldots,x_{n})=\sum_{k_{1}=0}^{q-1}\sum_{k_{2}=0}^{q-1}\cdots
\sum_{k_{n}=0}^{q-1}a_{k_{1}k_{2}\ldots k_{n}}{x_{1}}^{k_{1}}{x_{2}}^{k_{2}%
}\cdots{x_{n}}^{k_{n}}%
\]
where each coefficient $a_{k_{1}k_{2}\ldots k_{n}}\in\mathbb{F}$ is a
constant. The number $k_{1}+k_{2}+\cdots+k_{n}$ is the multivariate degree of
the term $a_{k_{1}k_{2}\ldots k_{n}}{x_{1}}^{k_{1}}{x_{2}}^{k_{2}}\cdots
{x_{n}}^{k_{n}}$ with nonzero coefficient $a_{k_{1}k_{2}\ldots k_{n}}$. The
greatest degree of all the terms of $f$ is called the algebraic degree,
denoted by $deg(f)$. The greatest degree of each individual variable $x_{i}$
will be denoted by $deg(f)_{i}$. Let $[n]=\{1,2,\ldots,n\}$.

It is shown in \cite{Veliz-Cuba} that it is no restriction of generality to
consider models in which the set of states of the model variables have the
algebraic structure of a finite field. The above fact that \emph{any} function
$\mathbb{F}^{n}\longrightarrow\mathbb{F}$ can be represented as a polynomial
makes the results of this paper valid in the most general setting of models
that are given as dynamical systems generated by iteration of set functions.

We now define a notion of canalyzing function in multi-state setting, which is
a straight-forward generalization of the Boolean case.

\begin{defn}
A function $f(x_{1},x_{2},\ldots,x_{n})$ is canalyzing in the $i$th variable
with canalyzing input value $a\in\mathbb{F}$ and canalyzed output value
$b\in\mathbb{F}$ if $f(x_{1},\ldots,x_{i-1},a,x_{i+1},\ldots,x_{n})=b$, for
any $(x_{1},x_{2},\ldots,x_{n})$.
\end{defn}

In other words, a function is canalyzing is there exits one variable $x_{i}$
such that, if $x_{i}$ receives certain inputs, this by itself determines the
value of the function. For the purpose of the proofs below we will need to use
families of canalyzing functions for which part of the specification is fixed,
such as the variable $x_{i}$ or $a$ or $b$ or some combination. For ease of
notation, we will refer to a canalyzing function just as canalyzing if no
additional information is specified. A function that is canalyzing in variable
$i$ with canalyzing input value $a\in\mathbb{F}$ and canalyzed output value
$b\in\mathbb{F}$ will be referred to as $<i:a:b>$ canalyzing.

We introduce an additional concept.

\begin{defn}
$f(x_{1},x_{2},\ldots, x_{n})$ is essential in variable $x_{i}$ if there exist
$r, s\in\mathbb{F}$ such that $f(x_{1},\ldots,x_{i-1},r,x_{i+1},\ldots
,x_{n})\neq f(x_{1},\ldots,x_{i-1},s,x_{i+1},\ldots,x_{n})$.
\end{defn}

\begin{example}
let $q=5$,$n=3$. $f(x_{1},x_{2},x_{3})=2(x_{1}-3)^{3}(x_{1}-2)x_{2}+1$. Then
this function is essential on $x_{1}$ and $x_{2}$ but not essential on $x_{3}%
$. It has algebraic degree 5 with $deg(f)_{1}=4$ and $deg(f)_{2}=1$. Note that
$f$ is canalyzing in $x_{1}$ with canalyzing input value 3 and canalyzed
output value 1, i.e. f is $<1:3:1>$ canalyzing. Note that $f$ is also
$<1:2:1>$ and $<2:0:1>$ canalyzing. Since $f$ is not essential in $x_{3}$ it
cannot be $<3:a:b>$ canalyzing for any $a,b\in\mathbb{F}_{q}$.
\end{example}

If a function has exactly one essential variable, say $x_{i}$, then its ANF
is
\[
f=a_{q-1}x_{i}^{q-1}+...+a_{1}x_{i}+a_{0}
\]
there exist a $j\geq1$ such that $a_{j}\neq0$. There are $q^{q}-q$ many such
functions since all the constants should be excluded. $\frac{1}{q}%
(q^{q}-q)=q^{q-1}-1$ many of them have fixed canalyzed value $b$ for any
$b\in\mathbb{F}$ since each number is equal. In total, there are
$n(q^{q-1}-1)$ many one essential variable canalyzing function with fixed
canalyzed value b for any b since there are $n$ variables.

There is only one constant function with fixed canalyzed value which is itself.

\begin{nota}
For $i\in{0,1,\dots,n}$ and $a,b\in\mathbb{F}_{q}$ we will use the following
notation,\newline$\mathcal{C}^{i}_{a,b}$: The set of all functions that are
canalyzing in the $i$th variable with canalyzing input value $a$ and canalyzed
output value $b$.\newline$\mathcal{C}^{i}_{*,b}$: The set of all functions
that are canalyzing in the $i$th variable with some canalyzing input value in
$\mathbb{F}_{q}$ and canalyzed output value $b$.\newline$\mathcal{C}^{i}%
_{a,*}$: The set of all functions that are canalyzing in the $i$th variable
with canalyzing input value $a$ and some canalyzed output value in
$\mathbb{F}_{q}$.\newline$\mathcal{C}^{*}_{a,b}$: The set of all functions
that are canalyzing on some variable with canalyzing input value $a$ and
canalyzed output value $b$.\newline$\mathcal{C}^{i}_{*,*}$: The set of all
functions that are canalyzing in the $i$th variable with some canalyzing input
value in $\mathbb{F}_{q}$ and some canalyzed output value in $\mathbb{F}_{q}%
$.\newline$\mathcal{C}^{*}_{a,*}$: The set of all functions that are
canalyzing on some variable with canalyzing input value $a$ and some canalyzed
output value in $\mathbb{F}_{q}$.\newline$\mathcal{C}^{*}_{*,b}$: The set of
all functions that are canalyzing on some variable with some canalyzing input
value in $\mathbb{F}_{q}$ and canalyzed output value $b$.\newline%
$\mathcal{C}^{*}_{*,*}$: The set of all functions that are canalyzing on some
variable with some canalyzing input value in $\mathbb{F}_{q}$ and some
canalyzed output value in $\mathbb{F}_{q}$, i.e., this set consists of all the
canalyzing functions.
\end{nota}

We have the following propositions.

\begin{prop}
\label{prop1} $\mathcal{C}^{i}_{a,b_{1}}\bigcap\mathcal{C}^{i}_{a,b_{2}} =
\emptyset$ whenever $b_{1}\neq b_{2}$.
\end{prop}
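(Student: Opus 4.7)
The plan is to argue directly from the definition of a canalyzing function by contradiction. Suppose for contradiction that $f \in \mathcal{C}^{i}_{a,b_{1}}\cap \mathcal{C}^{i}_{a,b_{2}}$ with $b_1 \neq b_2$. Then by the definition of $\mathcal{C}^{i}_{a,b_j}$, for every $(x_1,\ldots,x_{i-1},x_{i+1},\ldots,x_n)\in\mathbb{F}^{n-1}$ we have
\[
f(x_1,\ldots,x_{i-1},a,x_{i+1},\ldots,x_n) = b_1 \quad \text{and} \quad f(x_1,\ldots,x_{i-1},a,x_{i+1},\ldots,x_n) = b_2.
\]
Pick any such tuple (for instance the all-zero tuple) and evaluate both sides; this forces $b_1 = b_2$, contradicting $b_1 \neq b_2$.

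I do not anticipate any real obstacle here: the proposition is essentially a restatement of the fact that a function has a unique value at each point in its domain, together with the observation that the canalyzed output value $b$ is determined by $f$, $i$, and $a$ via the equation in Definition 2.1. The only care needed is to note that as long as $\mathbb{F}^{n-1}$ is nonempty (which it is, since $q\geq 2$ and $n\geq 1$ in the settings considered), we have a point at which to evaluate and extract the contradiction. No induction, no counting, and no structural properties of ANFs are required.
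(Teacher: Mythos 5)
Your proof is correct; the paper actually states this proposition without proof (treating it as obvious), and your argument --- evaluate $f$ at any point with $x_i=a$ to force $b_1=b_2$ --- is exactly the intended one, matching the style of the paper's proof of the companion Proposition~\ref{prop2}. Your remark that one needs a point at which to evaluate (guaranteed since $\mathbb{F}^{n-1}\neq\emptyset$) is a small but legitimate piece of care the paper does not bother to mention.
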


\begin{prop}
\label{prop2} $\mathcal{C}^{i_{1}}_{*,b_{1}}\bigcap\mathcal{C}^{i_{2}%
}_{*,b_{2}} = \emptyset$ whenever $b_{1}\neq b_{2}$ and $i_{1}\neq i_{2}$.
\end{prop}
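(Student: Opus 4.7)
The plan is to argue by contradiction. Suppose $f \in \mathcal{C}^{i_1}_{*,b_1} \cap \mathcal{C}^{i_2}_{*,b_2}$. Unpacking the definitions, there exist canalyzing input values $a_1, a_2 \in \mathbb{F}_q$ such that $f(x_1,\ldots,x_{i_1-1},a_1,x_{i_1+1},\ldots,x_n) = b_1$ for all choices of the remaining variables, and $f(x_1,\ldots,x_{i_2-1},a_2,x_{i_2+1},\ldots,x_n) = b_2$ for all choices of the remaining variables.

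The key point I would exploit is that $i_1 \neq i_2$, so the two partial substitutions do not clash: I can pick any point $(x_1,\ldots,x_n) \in \mathbb{F}_q^n$ with $x_{i_1} = a_1$ and $x_{i_2} = a_2$ simultaneously, and fill in the remaining $n-2$ coordinates arbitrarily. Evaluating $f$ at this point, the first canalyzing condition forces $f(x_1,\ldots,x_n) = b_1$, while the second forces $f(x_1,\ldots,x_n) = b_2$. Hence $b_1 = b_2$, contradicting the hypothesis.

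The argument is entirely formal and needs no case analysis or computation; the only thing to observe is that the two index substitutions $x_{i_1} \mapsto a_1$ and $x_{i_2} \mapsto a_2$ can be carried out simultaneously precisely because $i_1 \neq i_2$. If one wished to phrase the proof without contradiction, one could equivalently say: for any $f$ in the intersection, evaluation at any common extension of these two partial assignments produces a single value which equals both $b_1$ and $b_2$, so the intersection must be empty when $b_1 \neq b_2$. I do not anticipate any genuine obstacle; the statement is essentially the observation that two non-overlapping variable substitutions can be combined.
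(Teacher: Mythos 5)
Your proof is correct and follows essentially the same route as the paper's: both arguments take $f$ in the intersection, perform the two substitutions $x_{i_1}=a_1$ and $x_{i_2}=a_2$ simultaneously (possible since $i_1\neq i_2$), and conclude $b_1=b_2$, a contradiction. Your write-up is in fact slightly more careful than the paper's, which leaves the simultaneous-evaluation step implicit.
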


\begin{proof}
Let $f\in\mathcal{C}^{i_{1}}_{*,b_{1}}\bigcap\mathcal{C}^{i_{2}}_{*,b_{2}}$,
then there exist $a_{1}$ such that the value of $f$ should be always $b_{1}$
if we let $x_{i_{1}}=a_{1}$. Similarly, there exist $a_{2}$ such that the
value of $f$ should be $b_{2}$ if we let $x_{i_{2}}=a_{2}$. But $b_{1}\neq
b_{2}$, a contradiction.
\end{proof}

With the above notations, we have

$\mathcal{C}^{*}_{*,*}=\bigcup_{b\in\mathbb{F}}\mathcal{C}^{*}_{*,b}%
=\bigcup_{a\in\mathbb{F}}\mathcal{C}^{*}_{a,*}=\bigcup_{i\in[n]}%
\mathcal{C}^{i}_{*,*}$,

$\mathcal{C}^{*}_{*,b}=\bigcup_{i\in[n]}\mathcal{C}^{i}_{*,b}=\bigcup
_{a\in\mathbb{F}}\mathcal{C}^{*}_{a,b}$,

$\mathcal{C}^{*}_{a,*}=\bigcup_{b\in\mathbb{F}}\mathcal{C}^{*}_{a,b}%
=\bigcup_{i\in[n]}\mathcal{C}^{i}_{a,*}$,

$\mathcal{C}^{i}_{*,*}=\bigcup_{a\in\mathbb{F}}\mathcal{C}^{i}_{a,*}%
=\bigcup_{b\in\mathbb{F}}\mathcal{C}^{i}_{*,b}$,

$\mathcal{C}^{i}_{*,b}=\bigcup_{a\in\mathbb{F}}\mathcal{C}^{i}_{a,b}$,

$\mathcal{C}^{i}_{a,*}=\bigcup_{b\in\mathbb{F}}\mathcal{C}^{i}_{a,b}$,

$\mathcal{C}^{*}_{a,b}=\bigcup_{i\in[n]}\mathcal{C}^{i}_{a,b}$.

For any set $S$, we use $|S|$ to stand for its cardinality. We use
$C(n,k)=\frac{n!}{k!(n-k)!}$ to stand for the binomial coefficients.

Obviously, for the above notations, the cardinality are same for different
values of $i$, $a$ and $b$. In other words, we have $|\mathcal{C}^{i_{1}%
}_{a_{1},b_{1}}|=|\mathcal{C}^{i_{2}}_{a_{2},b_{2}}|$, $|\mathcal{C}^{i}%
_{*,b}|=|\mathcal{C}^{j}_{*,c}|$, $|\mathcal{C}^{*}_{a,b}|=|\mathcal{C}%
^{*}_{c,d}|$ and etc.

\section{Characterization and enumeration of canalyzing functions over
$\mathbb{F}$}

\label{3} Similar to \cite{Jar1} we have

\begin{lemma}
\label{lm3.1} $f(x_{1},x_{2},...x_{n})$ is $<i:a:b>$ canalyzing iff

$f(X)=f(x_{1},x_{2},...,x_{n})=(x_{i}-a)Q(x_{1},x_{2},...x_{n})+b$. where
$deg(Q)_{i}\leq q-2$.
\end{lemma}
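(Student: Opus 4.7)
The plan is to prove both directions of the equivalence, with the reverse direction being a one-line substitution and the forward direction being the real content.

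For the easy direction (``$\Leftarrow$''), I would simply substitute $x_i = a$ into the expression $f(X) = (x_i-a)Q(X)+b$: the first term vanishes and we obtain $f = b$ regardless of the other variables, so $f$ is $\langle i:a:b\rangle$ canalyzing. Note that the degree bound on $Q$ is not even needed for this direction.

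For the forward direction (``$\Rightarrow$''), I would perform polynomial division of $f - b$ by the linear polynomial $x_i - a$, treating $x_i$ as the main variable and the remaining $x_j$'s as parameters in the coefficient ring $\mathbb{F}[x_1,\dots,\widehat{x_i},\dots,x_n]$. Since $x_i - a$ is monic in $x_i$, this division goes through in this polynomial ring and yields a unique expression $f - b = (x_i - a)\,Q(X) + R(X')$ where $R$ has degree $0$ in $x_i$, i.e.\ lies in $\mathbb{F}[x_1,\dots,\widehat{x_i},\dots,x_n]$. Because $f$ is given in ANF, $\deg(f)_i \leq q-1$, and the division drops this degree in $x_i$ by exactly one, so $\deg(Q)_i \leq q-2$.

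The remaining step, and the one requiring a small care, is to show $R \equiv 0$ as a polynomial, not merely as a function. Substituting $x_i = a$ into the division identity and using the canalyzing hypothesis $f(x_1,\dots,a,\dots,x_n) = b$ yields $R(X') = 0$ for every choice of $X' \in \mathbb{F}^{n-1}$. Since polynomial division preserves the degrees in variables other than $x_i$, we have $\deg(R)_j \leq q-1$ for each $j\neq i$, so $R$ is already in reduced (ANF) form as a polynomial in $n-1$ variables. The uniqueness of the ANF representation over $\mathbb{F}_q$ (each function $\mathbb{F}_q^{n-1}\to\mathbb{F}_q$ corresponds to exactly one reduced polynomial) then forces $R$ to be the zero polynomial, completing the factorization $f - b = (x_i - a)Q(X)$ with the required degree bound on $Q$.

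The main obstacle, such as it is, lies in that last uniqueness step: one must remember that vanishing as a function only implies vanishing as a polynomial under the degree restrictions coming from ANF, which is why the ANF formalism introduced in Section~\ref{2} is essential here.
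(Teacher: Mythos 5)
Your proof is correct and follows essentially the same route as the paper: long division of $f$ (or $f-b$) by $x_i-a$ in the variable $x_i$, followed by substituting $x_i=a$ to kill the remainder. The only difference is that you explicitly justify why the remainder vanishing as a \emph{function} forces it to vanish as a \emph{polynomial} (via uniqueness of the ANF), a step the paper's proof asserts without comment; this is a welcome bit of extra care but not a different argument.
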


\begin{proof}
From the algebraic normal form of $f$, we rewrite it as $f=x_{i}^{q-1}%
g_{q-1}(X_{i})+x_{i}^{q-2}g_{q-2}(X_{i})...+x_{i}g_{1}(X_{i})+g_{0}(X_{i})$,
where $X_{i}=(x_{1},...,x_{i-1},x_{i+1},...,x_{n})$. Using long division we
get $f(X)=f(x_{1},x_{2},...,x_{n})=(x_{i}-a)Q(x_{1},x_{2},...,x_{n})+r(X_{i}%
)$. Since $f(X)$ is $<i:a:b>$ canalyzing, we get $f(X)=f(x_{1},...x_{i-1}%
,a,x_{i+1},...,x_{n})=b$ for any $(x_{1},x_{2},...x_{n})$, i.e.,$r(X_{i})=b$
for any $X_{i}$. So $r(X_{i})$ must be the constant $b$. We finished the
necessity. The sufficiency is obvious.
\end{proof}

The above lemma means $f(X)$ is $<i:a:b>$ iff $(x_{i}-a)|(f(X)-b)$.

Now we get our first formula.

\begin{lemma}
\label{lm3.2} For any $i\in\lbrack n]$ , $a$, $b\in\mathbb{F}$, there are
$q^{q^{n}-q^{n-1}}$ many $<i:a:b>$ canalyzing functions. In other words,
$|\mathcal{C}^{i}_{a,b}|=q^{q^{n}-q^{n-1}}$.
\end{lemma}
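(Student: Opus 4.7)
The plan is to use Lemma \ref{lm3.1} to set up a bijection between the set $\mathcal{C}^i_{a,b}$ and the set of polynomials $Q$ in $n$ variables with the prescribed degree bound, and then count the latter by counting the admissible ANF monomials.

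First, I would invoke Lemma \ref{lm3.1} to write each $f \in \mathcal{C}^i_{a,b}$ uniquely in the form $f = (x_i - a)Q(x_1,\ldots,x_n) + b$ with $\deg(Q)_i \le q-2$. Uniqueness of $Q$ is exactly what Lemma \ref{lm3.1} gives through the long division argument (since the remainder must equal the constant $b$), so the assignment $f \mapsto Q$ is injective, and conversely every such $Q$ yields an $<i:a:b>$ canalyzing $f$. Hence $|\mathcal{C}^i_{a,b}|$ equals the number of polynomials $Q$ in $n$ variables over $\mathbb{F}_q$ whose ANF satisfies $\deg(Q)_i \le q-2$ and $\deg(Q)_j \le q-1$ for $j \ne i$.

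Next I would count those $Q$. The monomials allowed in the ANF are ${x_1}^{k_1}\cdots {x_n}^{k_n}$ with $0 \le k_i \le q-2$ and $0 \le k_j \le q-1$ for $j \ne i$. The number of such monomials is
\[
(q-1)\cdot q^{n-1} = q^n - q^{n-1}.
\]
Since each of these monomials carries an independent coefficient in $\mathbb{F}_q$, the total number of admissible $Q$ is $q^{q^n - q^{n-1}}$, which is the desired count.

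The only subtle point, and therefore the main thing to be careful about, is ensuring the bijection is clean: one must not double count $Q$'s, and must make sure no additional $Q$ outside the degree restriction could arise. Both are handled by the uniqueness part of Lemma \ref{lm3.1} together with the uniqueness of the ANF representation of a function $\mathbb{F}_q^n \to \mathbb{F}_q$. Beyond this, the proof is a direct monomial count.
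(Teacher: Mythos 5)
Your proposal is correct and follows essentially the same route as the paper's own proof: invoke Lemma \ref{lm3.1} to reduce the count to the polynomials $Q$ with $\deg(Q)_i \le q-2$, then count the $(q-1)q^{n-1} = q^n - q^{n-1}$ admissible monomials, each with $q$ coefficient choices. Your extra attention to the uniqueness of $Q$ (hence the bijection being well-defined) is a point the paper leaves implicit, but it is the same argument.
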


\begin{proof}
In Lemma \ref{lm3.1}, $Q$ can be any polynomial with $deg(Q)_{i}\leq q-2$. Its
ANF is

$\sum_{k_{1}=0}^{q-1}...\sum_{k_{i}=0}^{q-2}...\sum_{k_{n}=0}^{q-1}%
a_{k_{1}k_{2}...k_{n}}{x_{1}}^{k_{1}}{x_{2}}^{k_{2}}...{x_{n}}^{k_{n}}$. Since
each coefficient has $q$ many choices and there are $(q-1)q^{n-1}%
=q^{n}-q^{n-1}$ monomials, we get what we want.
\end{proof}

Because $\mathcal{C}^{i}_{a,*}=\bigcup_{b\in\mathbb{F}}\mathcal{C}^{i}_{a,b}$,
by Proposition \ref{prop1}, we get

\begin{cor}
\label{Co1} $|\mathcal{C}^{i}_{a,*}|=q(q^{q^{n}-q^{n-1}})=q^{q^{n}-q^{n-1}+1}$.
\end{cor}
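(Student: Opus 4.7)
The plan is to combine the disjoint union decomposition recorded just after Proposition \ref{prop2} with Proposition \ref{prop1} and the count already established in Lemma \ref{lm3.2}. Specifically, I would start from the identity
\[
\mathcal{C}^{i}_{a,*}=\bigcup_{b\in\mathbb{F}}\mathcal{C}^{i}_{a,b},
\]
which was listed among the structural relations between the eight families.

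Next, I would invoke Proposition \ref{prop1}, which asserts that $\mathcal{C}^{i}_{a,b_{1}}\cap\mathcal{C}^{i}_{a,b_{2}}=\emptyset$ whenever $b_{1}\neq b_{2}$. This turns the above union into a disjoint union over the $q$ elements of $\mathbb{F}$. Consequently, the cardinality of $\mathcal{C}^{i}_{a,*}$ is simply the sum of the cardinalities of the individual pieces:
\[
|\mathcal{C}^{i}_{a,*}|=\sum_{b\in\mathbb{F}}|\mathcal{C}^{i}_{a,b}|.
\]
Now I would plug in Lemma \ref{lm3.2}, which gives $|\mathcal{C}^{i}_{a,b}|=q^{q^{n}-q^{n-1}}$ independently of $b$, and use $|\mathbb{F}|=q$ to conclude
\[
|\mathcal{C}^{i}_{a,*}|=q\cdot q^{q^{n}-q^{n-1}}=q^{q^{n}-q^{n-1}+1}.
\]

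There is essentially no obstacle: the corollary is a direct bookkeeping consequence of a previously proved disjointness statement and a previously proved count. The only point worth double-checking while writing it out is that the disjointness in Proposition \ref{prop1} really does match the index pattern of this union (same $i$, same $a$, varying $b$), which it does verbatim, so no extra argument is needed.
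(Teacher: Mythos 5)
Your proposal is correct and is exactly the argument the paper uses: it introduces the corollary with the sentence ``Because $\mathcal{C}^{i}_{a,*}=\bigcup_{b\in\mathbb{F}}\mathcal{C}^{i}_{a,b}$, by Proposition \ref{prop1}, we get \dots'', i.e.\ the same decomposition into a disjoint union over $b$ combined with the count $|\mathcal{C}^{i}_{a,b}|=q^{q^{n}-q^{n-1}}$ from Lemma \ref{lm3.2}. Your write-up just makes explicit the bookkeeping the paper leaves implicit.
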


\begin{lemma}
\label{lm3.3} For any $\{a_{1},a_{2},...,a_{k}\}\subset\mathbb{F}$,
$f(X)\in\bigcap_{j=1}^{k}\mathcal{C}^{i}_{a_{j},b}$ iff

$f(X)=f(x_{1},x_{2},...,x_{n})=(\prod_{j=1}^{k}(x_{i}-a_{j}))Q(x_{1}%
,x_{2},...,x_{n})+b$, where $deg(Q)_{i}\leq q-k-1$.
\end{lemma}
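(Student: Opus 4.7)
The plan is to proceed by induction on $k$, with the base case $k=1$ being precisely Lemma \ref{lm3.1}. For the inductive step, I would first invoke the inductive hypothesis on the subset $\{a_{1},\ldots,a_{k-1}\}$ to obtain a representation
\[
f(X) = \Bigl(\prod_{j=1}^{k-1}(x_{i}-a_{j})\Bigr)\,Q'(X) + b
\]
with $deg(Q')_{i}\leq q-k$, and then use the remaining canalyzing condition at $a_{k}$ to extract one more factor $(x_{i}-a_{k})$ from $Q'$.

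The extraction works as follows. Substituting $x_{i}=a_{k}$ in the displayed expression and using that $f$ is $<i:a_{k}:b>$ canalyzing gives
\[
\Bigl(\prod_{j=1}^{k-1}(a_{k}-a_{j})\Bigr)\,Q'(x_{1},\ldots,x_{i-1},a_{k},x_{i+1},\ldots,x_{n}) = 0
\]
for every assignment of the remaining variables. Since the $a_{j}$ are distinct elements of $\mathbb{F}$, the scalar $\prod_{j=1}^{k-1}(a_{k}-a_{j})$ is a nonzero element of $\mathbb{F}$, so $Q'$ vanishes identically when $x_{i}=a_{k}$. I would then re-apply the polynomial-division argument used in Lemma \ref{lm3.1}: viewing $Q'$ as a polynomial in $x_{i}$ whose coefficients live in the polynomial ring in the other variables, write $Q'(X)=(x_{i}-a_{k})Q(X)+r(X_{i})$, and observe that the vanishing of $Q'|_{x_{i}=a_{k}}$ forces the remainder $r$ to be identically zero. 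The individual degree in $x_{i}$ drops by one, giving $deg(Q)_{i}\leq deg(Q')_{i}-1\leq q-k-1$, and substituting back into the representation for $f$ yields the asserted form.

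The sufficiency direction is immediate: a function of the form $\bigl(\prod_{j=1}^{k}(x_{i}-a_{j})\bigr)Q(X)+b$ evaluates to $b$ as soon as $x_{i}$ is set to any $a_{j}$, so it lies in every $\mathcal{C}^{i}_{a_{j},b}$. The only subtle point worth flagging is the use of distinctness of the $a_{j}$ (guaranteed because they form a set) to keep the Vandermonde-style scalar nonzero, together with the implicit constraint $k\leq q$. In the boundary case $k=q$, the bound $deg(Q)_{i}\leq -1$ forces $Q=0$, so $f\equiv b$; this is consistent with the only function canalyzing at every possible input value being the constant $b$.
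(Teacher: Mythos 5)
Your proof is correct, but it takes a different route from the paper. The paper's argument is a one-liner: by Lemma \ref{lm3.1}, membership in $\mathcal{C}^{i}_{a_{j},b}$ means exactly that $(x_{i}-a_{j})$ divides $f(X)-b$, and since the linear factors $(x_{i}-a_{j})$ for distinct $a_{j}$ are pairwise coprime, their product divides $f(X)-b$, which is the claimed form. You instead induct on $k$, peeling off one factor at a time by substituting $x_{i}=a_{k}$, using the nonvanishing of the Vandermonde-style scalar $\prod_{j<k}(a_{k}-a_{j})$ to conclude that $Q'$ vanishes on the hyperplane $x_{i}=a_{k}$, and then running the long-division argument of Lemma \ref{lm3.1} again. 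The paper's approach is shorter but leans on the "pairwise coprime divisors give a product divisor" principle, which in the ring of functions (or with the degree constraint $deg(Q)_{i}\leq q-k-1$ tracked separately) requires a little care; your induction makes the degree bookkeeping and the role of distinctness completely explicit at each step, and it is essentially the same factor-extraction technique the paper itself deploys later in Lemmas \ref{lm3.7} and \ref{lm3.9}. Your remarks on sufficiency and on the boundary case $k=q$ (forcing $Q=0$ and $f\equiv b$) match the paper's note following Lemma \ref{lm3.4}.
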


\begin{proof}
From Lemma \ref{lm3.1}, we know $(x_{i}-a_{j})|(f(X)-b)$ for $j=1,2,...k$. So
is their product since they are pairwise coprime.
\end{proof}

\begin{lemma}
\label{lm3.4} $|\bigcap_{j=1}^{k}\mathcal{C}^{i}_{a_{j},b}|=q^{q^{n}-kq^{n-1}%
}$ for any $\{a_{1},a_{2},...,a_{k}\}\subset\mathbb{F}$.
\end{lemma}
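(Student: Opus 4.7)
The plan is to deduce the count directly from the parametrization provided by Lemma \ref{lm3.3}, showing that the map $Q \mapsto (\prod_{j=1}^{k}(x_i-a_j))Q + b$ gives a bijection between the set of admissible quotients $Q$ and the intersection $\bigcap_{j=1}^k \mathcal{C}^i_{a_j,b}$, then counting the admissible $Q$ via their algebraic normal forms.

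First I would invoke Lemma \ref{lm3.3}: every $f \in \bigcap_{j=1}^{k}\mathcal{C}^{i}_{a_{j},b}$ has the form $f = (\prod_{j=1}^{k}(x_i - a_j))\,Q + b$ with $\deg(Q)_i \leq q-k-1$. Next I would argue that this representation of $f$ is unique. The polynomial $\prod_{j=1}^k (x_i - a_j)$ is monic of degree $k$ in $x_i$, so performing polynomial long division in $x_i$ (with the remaining variables treated as parameters) produces a quotient and remainder that are uniquely determined; the restriction $\deg(Q)_i \leq q-k-1$ is exactly the bound forced by the condition $\deg(f)_i \leq q-1$ inherent in the ANF. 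Hence counting the canalyzing $f$ reduces to counting the admissible $Q$.

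Then I would count admissible $Q$ by examining its ANF. A polynomial $Q(x_1,\ldots,x_n)$ with $\deg(Q)_i \leq q-k-1$ and all other variable-degrees at most $q-1$ is a sum over monomials ${x_1}^{k_1}\cdots {x_n}^{k_n}$ where $k_i \in \{0,1,\ldots,q-k-1\}$ and $k_j \in \{0,1,\ldots,q-1\}$ for $j \neq i$. This gives $(q-k)\,q^{n-1}$ monomials, each with an independent coefficient in $\mathbb{F}_q$, so the total number of such $Q$ is
\[
q^{(q-k)q^{n-1}} = q^{q^{n}-kq^{n-1}},
\]
which is precisely the claimed cardinality.

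There is no real obstacle here; the main point to verify carefully is the uniqueness of the representation (so that the parametrization by $Q$ is a genuine bijection, not merely a surjection), which follows cleanly because $\prod_{j=1}^k(x_i-a_j)$ is monic in $x_i$. Everything else is just reading off the dimension count from the ANF as in the proof of Lemma \ref{lm3.2}.
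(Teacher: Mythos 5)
Your proposal is correct and follows essentially the same route as the paper: the paper proves Lemma \ref{lm3.4} by citing Lemma \ref{lm3.3} and repeating the monomial count from the proof of Lemma \ref{lm3.2}, which is exactly your computation of $(q-k)q^{n-1}$ free coefficients. The only difference is that you make explicit the injectivity of the parametrization $Q \mapsto (\prod_{j=1}^{k}(x_i-a_j))Q + b$, a point the paper leaves implicit; this is a worthwhile clarification but not a different argument.
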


\begin{proof}
This is similar to the proof of Lemma \ref{lm3.2} by Lemma \ref{lm3.3}.
\end{proof}

Note: If $k=q$, the above number is 1. This is because it means $(x_{i}%
-a_{j})|(f(X)-b)$ for all the $a_{j}$, $j=1,2,...,q$, i.e., $(x_{i}^{q}%
-x_{i})|(f(X)-b)$, where $x_{i}^{q}-x_{i}=\prod_{a\in\mathbb{F}}(x_{i}-a)$. So
$f(X)-b=0$ which means $f(x)=b$.

\begin{thm}
\label{th1} For any $i\in[n]$, $b\in\mathbb{F}$, $|\mathcal{C}^{i}%
_{*,b}|=|\bigcup_{a\in\mathbb{F}}\mathcal{C}^{i}_{a,b}|=q^{q^{n}}-(q^{q^{n-1}%
}-1)^{q}$.
\end{thm}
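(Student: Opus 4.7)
The plan is to apply the inclusion-exclusion principle to the union
\[
\mathcal{C}^{i}_{*,b} \;=\; \bigcup_{a\in\mathbb{F}} \mathcal{C}^{i}_{a,b}.
\]
The key input is Lemma~\ref{lm3.4}: for any subset $\{a_1,\ldots,a_k\}\subset\mathbb{F}$ of size $k$, the cardinality $|\bigcap_{j=1}^{k} \mathcal{C}^{i}_{a_j,b}| = q^{q^n - kq^{n-1}}$ depends only on $k$, not on which particular elements of $\mathbb{F}$ are chosen. Since $|\mathbb{F}| = q$, there are exactly $\binom{q}{k}$ such $k$-subsets.

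Putting these together, inclusion-exclusion yields
\[
|\mathcal{C}^{i}_{*,b}| \;=\; \sum_{k=1}^{q} (-1)^{k-1} \binom{q}{k} q^{q^n - kq^{n-1}}.
\]
To convert this to closed form, I would observe that $q^{q^n - kq^{n-1}} = (q^{q^{n-1}})^{q-k}$, which is precisely the shape required by the binomial theorem. Adding and subtracting the $k=0$ term (which equals $q^{q^n}$) gives
\[
|\mathcal{C}^{i}_{*,b}| \;=\; q^{q^n} \;-\; \sum_{k=0}^{q} (-1)^{k} \binom{q}{k} (q^{q^{n-1}})^{q-k} \;=\; q^{q^n} - (q^{q^{n-1}} - 1)^q,
\]
which is the desired formula.

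There is no real obstacle here; the work is entirely bookkeeping. The only step requiring a small trick is the last one, namely recognizing that the exponent $q^n - kq^{n-1}$ factors as $(q-k)\cdot q^{n-1}$, which is what lets the inclusion-exclusion sum collapse via the binomial theorem into $(q^{q^{n-1}} - 1)^q$. That Lemma~\ref{lm3.4} already handles arbitrary intersections uniformly is what makes this proof clean; without such uniformity, the inclusion-exclusion step would be substantially more delicate.
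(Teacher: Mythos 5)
Your proof is correct and follows essentially the same route as the paper: inclusion--exclusion over the $q$ sets $\mathcal{C}^{i}_{a,b}$, the uniform intersection count from Lemma~\ref{lm3.4}, and a final collapse via the binomial theorem (the paper factors out $q^{q^{n}}$ and writes $(1-q^{-q^{n-1}})^{q}$, while you write $(q^{q^{n-1}})^{q-k}$ directly, but this is the same manipulation). Nothing is missing.
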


\begin{proof}
By Inclusion and Exclusion Principle, we have $|\mathcal{C}^{i}_{*,b}%
|=|\bigcup_{a\in\mathbb{F}}\mathcal{C}^{i}_{a,b}|=$

$\sum_{a\in\mathbb{F}}|\mathcal{C}^{i}_{a,b}|-\sum_{\{a_{1},a_{2}%
\}\subset\mathbb{F}}|\mathcal{C}^{i}_{a_{1},b}\bigcap\mathcal{C}^{i}_{a_{2}%
,b}|+......$

$+(-1)^{k-1}\sum_{\{a_{1},a_{2},...,a_{k}\}\subset\mathbb{F}}|\bigcap
_{j=1}^{k}\mathcal{C}^{i}_{a_{j},b}|+...+(-1)^{q-1}=C(q,1)q^{q^{n}-q^{n-1}}-$

$C(q,2)q^{q^{n}-2q^{n-1}}+(-1)^{k}C(q,k)q^{q^{n}-kq^{n-1}}+...+1=\sum
_{k=1}^{q}(-1)^{k-1}C(q,k)q^{q^{n}-kq^{n-1}}=$

$q^{q^{n}}\sum_{k=1}^{q}(-C(q,k)(-q^{-q^{n-1}})^{k})=q^{q^{n}}%
(1-(1-q^{-q^{n-1}})^{q})=q^{q^{n}}-(q^{q^{n-1}}-1)^{q}$.
\end{proof}

Similarly,

\begin{lemma}
\label{lmNEW1} For any $\{i_{1},i_{2},...,i_{k}\}\subset[n]$, $f(X)\in
\bigcap_{j=1}^{k}\mathcal{C}^{i_{j}}_{a,b}$ iff

$f(X)=f(x_{1},x_{2},...,x_{n})=(\prod_{j=1}^{k}(x_{i_{j}}-a))Q(x_{1}%
,x_{2},...,x_{n})+b$, where $deg(Q)_{i_{j}}\leq q-1$, $j=1,2,...,k$.
\end{lemma}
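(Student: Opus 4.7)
My plan is to bootstrap from Lemma~\ref{lm3.1} via a coprimality argument in the polynomial ring $\mathbb{F}_q[x_1,\ldots,x_n]$, exactly in the spirit of the proof of Lemma~\ref{lm3.3}, but with the roles of ``different constants $a_j$ in the same variable $x_i$'' and ``same constant $a$ in different variables $x_{i_j}$'' interchanged.

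Sufficiency is immediate: if $f-b = \bigl(\prod_{j=1}^{k}(x_{i_j}-a)\bigr)Q$, then for any $\ell\in\{1,\dots,k\}$ the substitution $x_{i_\ell}=a$ annihilates the factor $(x_{i_\ell}-a)$, forcing $f\equiv b$ regardless of the remaining inputs. Hence $f\in \mathcal{C}^{i_\ell}_{a,b}$ for every $\ell$.

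For necessity, apply Lemma~\ref{lm3.1} separately for each $j=1,\dots,k$; this yields $(x_{i_j}-a)\mid (f(X)-b)$ in $\mathbb{F}_q[x_1,\dots,x_n]$. Because the $x_{i_j}$ are distinct indeterminates, the linear polynomials $x_{i_1}-a,\dots,x_{i_k}-a$ are pairwise coprime irreducibles in this UFD (any nontrivial common divisor of $x_{i_r}-a$ and $x_{i_s}-a$ would have to be a scalar multiple of $x_{i_r}-a$, which visibly fails to divide $x_{i_s}-a$). Unique factorization then upgrades the individual divisibilities to divisibility by the product, giving
\[
f(X)-b \;=\; \Bigl(\prod_{j=1}^k(x_{i_j}-a)\Bigr)\, Q(x_1,\dots,x_n)
\]
for some polynomial $Q$. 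The degree bound is a bookkeeping step: since $f-b$ is in ANF we have $\deg(f-b)_{i_j}\leq q-1$, and the factor $\prod_j(x_{i_j}-a)$ contributes degree exactly $1$ in each $x_{i_j}$, so $\deg(Q)_{i_j}\leq q-2$, which in particular satisfies the stated bound $\leq q-1$.

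The only step that is genuinely a step (as opposed to invocation-plus-bookkeeping) is the pairwise coprimality of the $x_{i_j}-a$ in the polynomial ring; everything downstream of that is automatic from UFD and from comparing ANF degrees. An alternative approach by induction on $k$ — peeling off one factor at a time with Lemma~\ref{lm3.1} and using an ANF degree-counting argument to show the remaining quotient is still canalyzing in the remaining variable — also works and avoids explicit mention of unique factorization, but I find the coprimality route cleaner.
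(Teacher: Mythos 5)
Your proof is correct and follows essentially the same route as the paper, which gives no explicit proof for this lemma but prefaces it with ``Similarly'' referring to Lemma~\ref{lm3.3}, whose argument is exactly your step: invoke Lemma~\ref{lm3.1} for each index to get $(x_{i_j}-a)\mid(f(X)-b)$ and pass to the product by pairwise coprimality. Your bookkeeping observation that the quotient actually satisfies $\deg(Q)_{i_j}\leq q-2$ is worth noting: that sharper bound (not the stated $q-1$) is what the count $q^{(q-1)^k q^{n-k}}$ in Lemma~\ref{lmNEW2} requires, so the $q-1$ in the lemma's statement appears to be a typo.
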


\begin{lemma}
\label{lmNEW2} $|\bigcap_{j=1}^{k}\mathcal{C}^{i_{j}}_{a,b}|=q^{(q-1)^{k}%
q^{n-k}}$ for any $\{i_{1},i_{2},...,i_{k}\}\subset[n]$.
\end{lemma}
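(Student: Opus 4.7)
The plan is to mimic the proof of Lemma \ref{lm3.2}, now using Lemma \ref{lmNEW1} as the structural characterization. By Lemma \ref{lmNEW1}, every $f \in \bigcap_{j=1}^{k}\mathcal{C}^{i_{j}}_{a,b}$ has the form
\[
f(X)=\Bigl(\prod_{j=1}^{k}(x_{i_{j}}-a)\Bigr)Q(x_{1},\ldots,x_{n})+b
\]
with $Q$ constrained so that $\deg(Q)_{i_{j}}\le q-2$ for each $j=1,\ldots,k$ (this is the degree-reduction consistent with the ANF representation; the degree in the remaining variables is only bounded by $q-1$). So the proof reduces to counting the admissible $Q$'s and observing that the map $Q\mapsto f$ is a bijection onto $\bigcap_{j=1}^{k}\mathcal{C}^{i_{j}}_{a,b}$.

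Next I would count monomials. Writing
\[
Q=\sum a_{k_{1}\ldots k_{n}}x_{1}^{k_{1}}\cdots x_{n}^{k_{n}},
\]
each exponent $k_{i_{j}}$ (for $j=1,\ldots,k$) ranges over $\{0,1,\ldots,q-2\}$, i.e.\ $q-1$ values, while every other exponent ranges over $\{0,1,\ldots,q-1\}$, i.e.\ $q$ values. The total number of available monomials is therefore $(q-1)^{k}q^{n-k}$, and since each coefficient can be any of the $q$ elements of $\mathbb{F}$, the number of admissible polynomials $Q$ is exactly $q^{(q-1)^{k}q^{n-k}}$.

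The one point needing care, which I'd expect to be the only genuine obstacle, is the bijectivity of $Q\mapsto f$. Injectivity follows from the uniqueness of the ANF of a function $\mathbb{F}^{n}\to\mathbb{F}$ together with the degree constraint $\deg(Q)_{i_{j}}\le q-2$: two distinct $Q$'s satisfying this bound would yield ANFs whose nonzero parts differ in some monomial of degree at most $q-1$ in each $x_{i_{j}}$, hence give different functions. Surjectivity is exactly Lemma \ref{lmNEW1}. Combining the two gives $|\bigcap_{j=1}^{k}\mathcal{C}^{i_{j}}_{a,b}|=q^{(q-1)^{k}q^{n-k}}$, completing the proof.
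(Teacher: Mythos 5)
Your proof is correct and takes essentially the same route the paper intends: the lemma is stated there without proof, the implicit argument being exactly the monomial count from the proof of Lemma~\ref{lm3.2} applied to the factorization in Lemma~\ref{lmNEW1}. One remark: you use the bound $\deg(Q)_{i_{j}}\leq q-2$, while the paper's Lemma~\ref{lmNEW1} literally says $\deg(Q)_{i_{j}}\leq q-1$; your bound is the right one (the paper's is a typo), since only $q-2$ keeps $f$ in reduced ANF, makes $Q\mapsto f$ injective, and produces the exponent $(q-1)^{k}q^{n-k}$.
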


\begin{thm}
\label{th2} $|\mathcal{C}^{*}_{a,b}|=\sum_{1\leq k\leq n}(-1)^{k-1}%
C(n,k)q^{(q-1)^{k}q^{n-k}}$.
\end{thm}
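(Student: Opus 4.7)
The plan is to mimic the proof of Theorem \ref{th1}, but this time running the Inclusion-Exclusion Principle over the coordinate index $i$ rather than over the canalyzing input value $a$. The starting point is the identity
\[
\mathcal{C}^{*}_{a,b} \;=\; \bigcup_{i\in[n]} \mathcal{C}^{i}_{a,b},
\]
recorded just after Proposition \ref{prop2}. Unlike the situation in Theorem \ref{th1}, the sets $\mathcal{C}^{i}_{a,b}$ for different $i$ are \emph{not} disjoint (a single function can be canalyzing in several variables with the same pair $(a,b)$), so a nontrivial alternating sum is needed.

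First I would write
\[
|\mathcal{C}^{*}_{a,b}| \;=\; \sum_{k=1}^{n} (-1)^{k-1} \sum_{\{i_{1},\dots,i_{k}\}\subset[n]} \left|\,\bigcap_{j=1}^{k} \mathcal{C}^{i_{j}}_{a,b}\,\right|.
\]
Then I would invoke Lemma \ref{lmNEW2}, which says that every such $k$-fold intersection has cardinality $q^{(q-1)^{k}q^{n-k}}$, a value that depends only on $k$ and not on the particular choice of the coordinate subset (this symmetry is transparent from the divisibility characterization in Lemma \ref{lmNEW1}: the constraint $f - b \equiv 0$ on the union of hyperplanes $\bigcup_{j}\{x_{i_{j}}=a\}$ leaves $f$ free to take any value on the complement, which has exactly $(q-1)^{k}q^{n-k}$ points, regardless of which $k$ coordinates were chosen). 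Pulling the common value out of the inner sum and counting the $k$-subsets gives the factor $C(n,k)$, producing the claimed formula.

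The hardest part of the argument is really the lemma that is already in place, namely the clean count in Lemma \ref{lmNEW2}. Given that, the theorem is a pure bookkeeping exercise: I only need to confirm that the upper limit of the outer sum is indeed $n$ (since $[n]$ has no $k$-subsets for $k>n$, so no extra terms appear), and that the signs line up correctly. No further analysis of coprime factors, degree reductions, or edge cases is required at this stage.
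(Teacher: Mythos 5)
Your proof is correct and essentially identical to the paper's: inclusion--exclusion over the $k$-subsets of $[n]$, with each intersection $\bigcap_{j=1}^{k}\mathcal{C}^{i_{j}}_{a,b}$ evaluated by Lemma \ref{lmNEW2} as $q^{(q-1)^{k}q^{n-k}}$ and the factor $C(n,k)$ coming from counting the subsets. (One minor aside: your contrast with Theorem \ref{th1} is slightly off, since there too the sets $\mathcal{C}^{i}_{a,b}$ for different values of $a$ are not disjoint and a nontrivial alternating sum is needed; the disjointness in Proposition \ref{prop1} concerns distinct canalyzed outputs $b$, not distinct inputs $a$.)
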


\begin{proof}
$|\mathcal{C}^{*}_{a,b}|=|\bigcup_{i\in[n]}\mathcal{C}^{i}_{a,b}|=\sum_{1\leq
i\leq n}|\mathcal{C}^{i}_{a,b}|-\sum_{1\leq i<j\leq n}|\mathcal{C}^{i}%
_{a,b}\bigcap\mathcal{C}^{j}_{a,b}|+...+$

$(-1)^{k-1}\sum_{1\leq i_{1}<i_{1}...<i_{k}\leq n}|\bigcap_{j=1}%
^{k}\mathcal{C}^{i_{j}}_{a,b}|+...+(-1)^{n-1}|\bigcap_{j=1}^{n}\mathcal{C}%
^{j}_{a,b}|=$

$C(n,1)q^{(q-1)q^{n-1}}-C(n,2)q^{(q-1)^{2}q^{n-2}}+...$

$+(-1)^{k-1}C(n,k)q^{(q-1)^{k}q^{n-k}}+...+(-1)^{n-1}q^{(q-1)^{n}}=\sum_{1\leq
k\leq n}(-1)^{k-1}C(n,k)q^{(q-1)^{k}q^{n-k}}$.
\end{proof}

\begin{cor}
\label{co2} $|\mathcal{C}^{*}_{a,*}|=q\sum_{1\leq k\leq n}(-1)^{k-1}%
C(n,k)q^{(q-1)^{k}q^{n-k}}$
\end{cor}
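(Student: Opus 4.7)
The plan is to deduce the corollary from Theorem~\ref{th2} by summing over the canalyzed output value $b$, after verifying that the sets $\mathcal{C}^{*}_{a,b}$ partition $\mathcal{C}^{*}_{a,*}$ as $b$ ranges over $\mathbb{F}$. The identity $\mathcal{C}^{*}_{a,*}=\bigcup_{b\in\mathbb{F}}\mathcal{C}^{*}_{a,b}$ has already been recorded just after Proposition~\ref{prop2}, so the content to establish is the disjointness of this union.

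First I would note that Theorem~\ref{th2} gives the same value $S:=\sum_{1\leq k\leq n}(-1)^{k-1}C(n,k)q^{(q-1)^{k}q^{n-k}}$ for $|\mathcal{C}^{*}_{a,b}|$ for every choice of $b\in\mathbb{F}$ (since, as remarked at the end of Section~\ref{2}, these cardinalities do not depend on $a$ or $b$). Hence, once disjointness of the union is proved, the corollary follows immediately:
\[
|\mathcal{C}^{*}_{a,*}|=\sum_{b\in\mathbb{F}}|\mathcal{C}^{*}_{a,b}|=q\cdot S,
\]
which is the stated formula.

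The main step is therefore to prove $\mathcal{C}^{*}_{a,b_1}\cap\mathcal{C}^{*}_{a,b_2}=\emptyset$ whenever $b_1\neq b_2$. Suppose $f$ lies in this intersection; since $\mathcal{C}^{*}_{a,b}=\bigcup_{i\in[n]}\mathcal{C}^{i}_{a,b}$, we have $f\in\mathcal{C}^{i_1}_{a,b_1}\cap\mathcal{C}^{i_2}_{a,b_2}$ for some indices $i_1,i_2$. If $i_1=i_2$, Proposition~\ref{prop1} gives an immediate contradiction. If $i_1\neq i_2$, pick any tuple $(x_1,\ldots,x_n)$ whose $i_1$-th and $i_2$-th coordinates both equal $a$; then by the $<i_1{:}a{:}b_1>$ property $f$ evaluates to $b_1$ at this point, while by the $<i_2{:}a{:}b_2>$ property it evaluates to $b_2$, forcing $b_1=b_2$ and a contradiction. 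This argument is a mild strengthening of Proposition~\ref{prop2} (which assumes $i_1\neq i_2$ and different canalyzing inputs are allowed); the key observation is that when the canalyzing \emph{input} $a$ is fixed and common to both coordinates, we can simultaneously substitute $a$ into both slots and compare the two forced values.

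The only potential subtlety is the logical order: one must not invoke Proposition~\ref{prop2} directly, because there the canalyzing inputs are unspecified (they only have to exist in $\mathbb{F}$), whereas here both coordinates share the same input $a$, which is exactly what makes the simultaneous substitution possible. Once this short two-line argument is in place, combining it with Theorem~\ref{th2} yields the result in one line.
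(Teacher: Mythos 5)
Your proposal is correct and takes essentially the same approach as the paper: write $\mathcal{C}^{*}_{a,*}=\bigcup_{b\in\mathbb{F}}\mathcal{C}^{*}_{a,b}$, prove the union is disjoint by splitting into the cases $i_{1}=i_{2}$ (Proposition \ref{prop1}) and $i_{1}\neq i_{2}$, and multiply the count from Theorem \ref{th2} by $q$. The only divergence is that in the case $i_{1}\neq i_{2}$ you re-derive the simultaneous-substitution argument, whereas the paper simply cites Proposition \ref{prop2} via the inclusions $\mathcal{C}^{i_{1}}_{a,b_{1}}\subset\mathcal{C}^{i_{1}}_{*,b_{1}}$ and $\mathcal{C}^{i_{2}}_{a,b_{2}}\subset\mathcal{C}^{i_{2}}_{*,b_{2}}$ --- and, contrary to your caveat, this direct citation is valid, since subsets of disjoint sets have empty intersection; your version is merely the special case $a_{1}=a_{2}=a$ of the same argument.
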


\begin{proof}
$\mathcal{C}^{*}_{a,*}=\bigcup_{b\in\mathbb{F}}\mathcal{C}^{*}_{a,b}$, we need
to show $\mathcal{C}^{*}_{a,b_{1}}\bigcap\mathcal{C}^{*}_{a,b_{2}}=\phi$ if
$b_{1}\neq b_{2}$. Suppose $f\in\mathcal{C}^{*}_{a,b_{1}}\bigcap
\mathcal{C}^{*}_{a,b_{2}}$, then there exist $i_{1}$ and $i_{2}\in[n]$ such
that $f\in\mathcal{C}^{i_{1}}_{a,b_{1}}\bigcap\mathcal{C}^{i_{2}}_{a,b_{2}}$
since $\mathcal{C}^{*}_{a,b}=\bigcup_{i\in[n]}\mathcal{C}^{i}_{a,b}$. If
$i_{1}=i_{2}$, we get a contradiction by Proposition \ref{prop1}. If
$i_{1}\neq i_{2}$, we get a contradiction by Proposition \ref{prop2} since
$\mathcal{C}^{i_{1}}_{a,b_{1}}\subset\mathcal{C}^{i_{1}}_{*,b_{1}}$ and
$\mathcal{C}^{i_{2}}_{a,b_{2}}\subset\mathcal{C}^{i_{2}}_{*,b_{2}}$
\end{proof}

Now, we are going to find the formula for the number of all the canalyzing
functions with given canalyzed value $b$. In other words, the formula of
$|\mathcal{C}^{*}_{*,b}|$.

Let $S_{b}=\{\mathcal{C}^{i}_{a,b}|i\in\lbrack n],a\in\mathbb{F}\}$ for any
$b\in\mathbb{F}$. By Inclusion and Exclusion Principle , we have
$|\mathcal{C}^{*}_{*,b}|=|\bigcup_{i\in\lbrack n]}\mathcal{C}^{i}%
_{*,b}|=|\bigcup_{i\in\lbrack n]}\bigcup_{a\in\mathbb{F}}\mathcal{C}^{i}%
_{a,b}|=\sum_{k=1}^{nq}(-1)^{k-1}N_{k}$, where $N_{k}=\sum_{s\subset
S_{b},|s|=k}|\bigcap_{T\in s}T|$.

In order to evaluate $N_{k}$, we write all the elements in $S_{b}$ as the
following $n\times q$ matrix.

$\mathcal{C}^{1}_{a_{1},b}\quad\mathcal{C}^{1}_{a_{2},b}.........\mathcal{C}%
^{1}_{a_{q},b}$

$\mathcal{C}^{2}_{a_{1},b}\quad\mathcal{C}^{2}_{a_{2},b}.........\mathcal{C}%
^{2}_{a_{q},b}$

............................

$\mathcal{C}^{n}_{a_{1},b}\quad\mathcal{C}^{n}_{a_{2},b}.........\mathcal{C}%
^{n}_{a_{q},b}$

For any $s\subset S_{b}$ with $|s|=k$, we will chose $k$ elements from the
above matrix to form $s$.

Suppose $k_{1}$ of its elements are from the first row (there are $C(q,k_{1})$
many ways to do so). Let these $k_{1}$ elements be $\mathcal{C}^{1}_{a_{11}%
,b},\mathcal{C}^{1}_{a_{12}},...,\mathcal{C}^{1}_{a_{1{k_{1}}},b}$.

Suppose $k_{2}$ of its elements are from the second row (there are
$C(q,k_{2})$ many ways to do so). Let these $k_{2}$ elements be $\mathcal{C}%
^{2}_{a_{21},b},\mathcal{C}^{2}_{a_{22}},...,\mathcal{C}^{2}_{a_{2{k_{2}}},b}$.

..............................

Suppose $k_{n}$ of its elements are from the last row (there are $C(q,k_{n})$
many ways to do so). Let these $k_{n}$ elements be $\mathcal{C}^{n}_{a_{n1}%
,b},\mathcal{C}^{n}_{a_{n2}},...,\mathcal{C}^{n}_{a_{n{k_{n}}},b}$.

$k_{1}+k_{2}+...+k_{n}=k, 0\leq k_{i}\leq q, i=1,2,...,n$.

Similarly to lemma \ref{lm3.3}, we have

\begin{lemma}
\label{lm3.5} $f\in\bigcap_{T\in s}T$ iff $f=Q(\prod_{i=1}^{k_{1}}%
(x_{1}-a_{1i}))(\prod_{i=1}^{k_{2}}(x_{2}-a_{2i}))...(\prod_{i=1}^{k_{n}%
}(x_{n}-a_{ni}))+b$, where $deg(Q)_{i}\leq q-k_{i}-1,i=1,2,...,n.$
\end{lemma}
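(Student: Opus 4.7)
The plan is to apply Lemma~\ref{lm3.3} one variable at a time and combine the resulting per-variable divisibilities via the fact that $\mathbb{F}[x_1,\ldots,x_n]$ is a UFD. Setting $P_i(x_i):=\prod_{j=1}^{k_i}(x_i-a_{ij})$ (with $P_i=1$ when $k_i=0$), the hypothesis $f\in\bigcap_{T\in s}T$ forces $f\in\bigcap_{j=1}^{k_i}\mathcal{C}^{i}_{a_{ij},b}$ for every $i$, so Lemma~\ref{lm3.3} gives $P_i(x_i)\mid f(X)-b$ in $\mathbb{F}[x_1,\ldots,x_n]$ for each $i$.

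For the necessity direction, the key observation is that the $P_i$ live in pairwise disjoint sets of variables, so their irreducible factors $(x_i-a_{ij})$ are pairwise non-associate in the UFD $\mathbb{F}[x_1,\ldots,x_n]$; this makes the $P_i$ pairwise coprime. Hence $\prod_{i=1}^{n}P_i(x_i)$ divides $f(X)-b$, producing $Q\in\mathbb{F}[x_1,\ldots,x_n]$ with $f(X)-b=Q(X)\prod_{i=1}^{n}P_i(x_i)$. To bound the partial degrees, note that each $P_i$ is monic of degree exactly $k_i$ in $x_i$ and constant in the other variables, so as an identity in $\mathbb{F}[x_1,\ldots,x_n]$ we have $deg(f-b)_i=deg(Q)_i+k_i$; combined with the ANF bound $deg(f)_i\leq q-1$, this yields $deg(Q)_i\leq q-k_i-1$. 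Sufficiency is immediate, since substituting $x_i=a_{ij}$ for any of the chosen inputs kills the factor $(x_i-a_{ij})$ of $P_i$ and forces $f\equiv b$ on that slice.

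The main obstacle is the bundling step: turning $n$ separate per-variable divisibility statements into a single divisibility by $\prod_{i=1}^{n}P_i$ while simultaneously controlling the partial degrees of $Q$. Both parts reduce cleanly to two facts once isolated: the $P_i$ are pairwise coprime in the UFD (they involve disjoint variables), and each $P_i$ is monic of degree exactly $k_i$ in its variable, which makes the partial-degree count both additive and exact.
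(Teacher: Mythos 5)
Your proof is correct and follows essentially the same route the paper intends: the paper proves Lemma~\ref{lm3.5} by direct analogy with Lemma~\ref{lm3.3}, whose argument is exactly your coprimality step (distinct linear factors, here additionally in disjoint variables, are pairwise coprime, so their product divides $f-b$). You simply make explicit the UFD justification and the partial-degree bookkeeping $deg(f-b)_i=deg(Q)_i+k_i$ that the paper leaves implicit.
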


Similarly to Lemma \ref{lm3.4}, we have

\begin{lemma}
\label{lm3.6} $|\bigcap_{T\in s} T|=q^{(q-k_{1})(q-k_{2})...(q-k_{n})}$.
\end{lemma}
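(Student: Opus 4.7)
The plan is to mimic the passage from Lemma \ref{lm3.1} to Lemma \ref{lm3.2} (equivalently, from Lemma \ref{lm3.3} to Lemma \ref{lm3.4}), now using Lemma \ref{lm3.5} in place of the earlier characterization. By Lemma \ref{lm3.5}, every $f\in\bigcap_{T\in s}T$ has the form $f=Q\cdot P+b$, where
\[
P=\prod_{r=1}^{n}\prod_{j=1}^{k_{r}}(x_{r}-a_{rj})
\]
is the fixed polynomial determined by $s$, and $Q$ ranges over all polynomials satisfying $deg(Q)_{r}\le q-k_{r}-1$ for every $r\in[n]$. Thus the enumeration reduces to counting such admissible $Q$'s, once it is shown that the assignment $Q\mapsto Q\cdot P+b$ is injective.

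First I would enumerate the admissible $Q$'s via their ANF: writing
\[
Q=\sum a_{j_{1}j_{2}\cdots j_{n}}\,x_{1}^{j_{1}}x_{2}^{j_{2}}\cdots x_{n}^{j_{n}},\qquad 0\le j_{r}\le q-k_{r}-1,
\]
there are exactly $\prod_{r=1}^{n}(q-k_{r})$ monomials, and each coefficient may be chosen freely from $\mathbb{F}$. This immediately yields $q^{(q-k_{1})(q-k_{2})\cdots(q-k_{n})}$ polynomials $Q$, matching the claimed formula.

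The only subtle point is injectivity of $Q\mapsto Q\cdot P+b$, which is where I would spend a sentence or two. For each $r$, since $deg(P)_{r}=k_{r}$ (the factors $x_{r}-a_{rj}$ for $j=1,\dots,k_{r}$ are pairwise coprime) and $deg(Q)_{r}\le q-k_{r}-1$, one has $deg(Q\cdot P+b)_{r}\le q-1$. Hence $Q\cdot P+b$ is already in canonical ANF, and by uniqueness of the ANF representation of functions $\mathbb{F}^{n}\to\mathbb{F}$, distinct $Q$'s produce distinct $f$'s. The degenerate case $k_{r}=q$ for some $r$ forces $deg(Q)_{r}\le -1$, i.e.\ $Q\equiv 0$ and $f\equiv b$, which is consistent with the formula giving $q^{0}=1$. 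The main (and only) obstacle is this injectivity step, which works precisely because the degree budgets for $Q$ and for $P$ are complementary in each variable, so no reduction modulo $x_{r}^{q}-x_{r}$ is ever needed.
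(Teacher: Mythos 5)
Your proof is correct and follows essentially the same route as the paper, which simply invokes the characterization (Lemma \ref{lm3.5}) and counts the coefficients in the ANF of $Q$, exactly as in the proof of Lemma \ref{lm3.2}. The only difference is that you make explicit the injectivity of $Q\mapsto Q\cdot P+b$ (via the complementary degree budgets and uniqueness of the ANF), a point the paper leaves implicit; this is a welcome clarification but not a different method.
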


Hence,%

\[
N_{k}=\sum_{\substack{k_{1}+k_{2}+...+k_{n}=k}}C(q,k_{1})C(q,k_{2}%
)...C(q,k_{n})q^{(q-k_{1})(q-k_{2})...(q-k_{n})}.
\]

We get

\begin{thm}
\label{th3} For any $b\in F$, we have
\[
|\mathcal{C}^{*}_{*,b}|=\sum_{\substack{k=1}}^{nq}(-1)^{k-1}(\sum
_{\substack{k_{1}+k_{2}+...+k_{n}=k}}(\prod_{\substack{j=1}}^{n}%
C(q,k_{j}))q^{\prod_{\substack{j=1}}^{n}(q-k_{j})})
\]

\end{thm}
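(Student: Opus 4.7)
The plan is to apply the inclusion-exclusion principle directly to the union
$$\mathcal{C}^{*}_{*,b} \;=\; \bigcup_{i\in[n]}\bigcup_{a\in\mathbb{F}} \mathcal{C}^{i}_{a,b},$$
which is a union of exactly $nq$ sets, one for each pair $(i,a)$. Arranging these sets in the $n\times q$ matrix displayed before the theorem, any subfamily $s$ of size $k$ is specified by choosing some number $k_j\ge 0$ of entries from row $j$, for $j=1,\dots,n$, subject to $k_1+\cdots+k_n=k$ and $0\le k_j\le q$.

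Next I would count, for a fixed composition $(k_1,\dots,k_n)$, the contribution to $N_k=\sum_{|s|=k}\bigl|\bigcap_{T\in s}T\bigr|$. The number of ways to pick the specified rows' entries is $\prod_{j=1}^{n} C(q,k_j)$, and for each such choice Lemma~\ref{lm3.5} identifies the intersection with the set of functions of the shape
$$f \;=\; Q(x_1,\dots,x_n)\prod_{j=1}^{n}\prod_{i=1}^{k_j}(x_j - a_{ji}) + b,\qquad \deg(Q)_j \le q-k_j-1,$$
so that by Lemma~\ref{lm3.6} the intersection has cardinality $q^{\prod_{j=1}^{n}(q-k_j)}$. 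Summing over all compositions of $k$ gives the closed form for $N_k$ already displayed in the text:
$$N_k \;=\; \sum_{k_1+\cdots+k_n=k}\Bigl(\prod_{j=1}^{n} C(q,k_j)\Bigr)\, q^{\prod_{j=1}^{n}(q-k_j)}.$$

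Plugging this into the inclusion-exclusion expansion $|\mathcal{C}^{*}_{*,b}| = \sum_{k=1}^{nq}(-1)^{k-1}N_k$ yields the claimed formula.

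The only real obstacle is verifying that Lemmas~\ref{lm3.5} and~\ref{lm3.6} genuinely apply to \emph{every} intersection indexed by a subfamily $s$, regardless of which particular $k_j$ canalyzing inputs are chosen in each row. This is where the argument of Lemma~\ref{lm3.3} must be re-used across different variables simultaneously: within row $j$ the factors $(x_j - a_{ji})$ are pairwise coprime, and across rows the factors in distinct variables are coprime as well, so their product divides $f-b$ and the quotient $Q$ is free subject to $\deg(Q)_j\le q-k_j-1$. Once this is granted, the count $q^{\prod_j(q-k_j)}$ follows from the same monomial-counting step used in Lemma~\ref{lm3.2}, and the theorem is assembled by routine substitution. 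I would not attempt any further simplification of the outer sum, since the compositions $(k_1,\dots,k_n)$ interact nontrivially with the product $\prod_j(q-k_j)$ in the exponent and the expression does not telescope as cleanly as in Theorem~\ref{th1} or Theorem~\ref{th2}.
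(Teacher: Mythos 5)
Your proposal is correct and follows essentially the same route as the paper: inclusion--exclusion over the $nq$ sets $\mathcal{C}^{i}_{a,b}$ arranged in the $n\times q$ matrix, with the intersection for a fixed composition $(k_1,\dots,k_n)$ identified and counted via Lemmas~\ref{lm3.5} and~\ref{lm3.6}, yielding the stated $N_k$. The coprimality justification you flag as the ``only real obstacle'' is exactly the content the paper supplies in Lemma~\ref{lm3.5} (by analogy with Lemma~\ref{lm3.3}), so there is nothing further to add.
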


In summary: For the following eight different classes of canalyzing functions,

$\mathcal{C}^{i}_{a,b}$, $\mathcal{C}^{i}_{*,b}$, $\mathcal{C}^{i}_{a,*}$,
$\mathcal{C}^{*}_{a,b}$, $\mathcal{C}^{i}_{*,*}$, $\mathcal{C}^{*}_{a,*}$,
$\mathcal{C}^{*}_{*,b}$, $\mathcal{C}^{*}_{*,*}$ We have found the cardinality
of all except for $\mathcal{C}^{i}_{*,*}$ and $\mathcal{C}^{*}_{*,*}$. We need
more characterizations. We have

\begin{lemma}
\label{lm3.7} Given $\{a_{1},a_{2},...,a_{k}\}\subset\mathbb{F}$ and
$\{b_{1},b_{2},...,b_{k}\}\subset\mathbb{F}$, Then $f\in\bigcap_{j=1}%
^{k}\mathcal{C}^{i}_{a_{j},b_{j}}$ iff $f=Q_{k}(X)\prod_{j=1}^{k}(x_{i}%
-a_{j})+A_{k-1}\prod_{j=1}^{k-1}(x_{i}-a_{j})+...+A_{1}(x_{i}-a_{1})+A_{0}$.
where $A_{0}=b_{1}$, $a_{t}\in F$, $A_{t}$ is determined by $a_{1}%
,...,a_{t+1}$, $b_{1},...,b_{t+1}$, $t=1,2,...,k-1$. $deg(Q_{k})_{i}\leq
q-k-1$.
\end{lemma}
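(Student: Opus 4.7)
The plan is induction on $k$, with the base case $k=1$ handled directly by Lemma \ref{lm3.1} (setting $A_0 = b_1$ and noting $\deg(Q_1)_i \leq q-2$ matches the claimed bound).

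For the inductive step, assume the characterization for $k$ and let $f \in \bigcap_{j=1}^{k+1} \mathcal{C}^{i}_{a_j, b_j}$. Since this intersection is contained in $\bigcap_{j=1}^{k} \mathcal{C}^{i}_{a_j, b_j}$, the induction hypothesis gives
\[
f = Q_k(X)\prod_{j=1}^{k}(x_i - a_j) + \sum_{t=0}^{k-1} A_t \prod_{j=1}^{t}(x_i - a_j),
\]
with $\deg(Q_k)_i \leq q-k-1$ and $A_0,\ldots,A_{k-1}$ already determined by the earlier $a_j,b_j$. Next I would evaluate at $x_i = a_{k+1}$ and use $f|_{x_i=a_{k+1}} = b_{k+1}$ to obtain, as a polynomial identity in the remaining variables $X_i = (x_1,\ldots,x_{i-1},x_{i+1},\ldots,x_n)$,
\[
b_{k+1} = Q_k\big|_{x_i=a_{k+1}}\cdot\prod_{j=1}^{k}(a_{k+1} - a_j) + \sum_{t=0}^{k-1} A_t \prod_{j=1}^{t}(a_{k+1} - a_j).
\]
Because $a_1,\ldots,a_{k+1}$ are distinct in $\mathbb{F}$ the scalar $\prod_{j=1}^{k}(a_{k+1} - a_j)$ is nonzero, and the remaining summands are constants, so $Q_k|_{x_i=a_{k+1}}$ is forced to equal a constant $A_k \in \mathbb{F}$ depending only on $a_1,\ldots,a_{k+1}, b_1,\ldots,b_{k+1}$. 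Then $Q_k - A_k$ vanishes at $x_i = a_{k+1}$, and long division in $x_i$ (as in Lemma \ref{lm3.1}) gives $Q_k = (x_i - a_{k+1})Q_{k+1} + A_k$ with $\deg(Q_{k+1})_i \leq \deg(Q_k)_i - 1 \leq q-k-2$. Substituting this back telescopes into the claimed decomposition for $k+1$.

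Sufficiency is a direct check: substituting $x_i = a_j$ annihilates every product $\prod_{s=1}^{t}(x_i - a_s)$ with $t \geq j$, reducing $f|_{x_i=a_j}$ to $\sum_{t=0}^{j-1} A_t \prod_{s=1}^{t}(a_j - a_s)$; a parallel induction on $j$, using the recursive defining formula for $A_{j-1}$ extracted in the necessity argument, shows this sum equals $b_j$.

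The one subtle point is showing that $Q_k|_{x_i=a_{k+1}}$ really is a scalar, since a priori it is a polynomial in $X_i$. The key is to read the displayed identity as an equality of polynomials in $X_i$; since every term on the right other than $Q_k|_{x_i=a_{k+1}}\cdot\prod_{j=1}^{k}(a_{k+1}-a_j)$ is a constant, and the prefactor $\prod_{j=1}^{k}(a_{k+1}-a_j)$ is a nonzero element of $\mathbb{F}$, the restriction must itself be a scalar. Once this is settled, both the explicit recursion for $A_k$ and the degree bookkeeping follow mechanically.
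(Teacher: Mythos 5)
Your proposal is correct and follows essentially the same route as the paper: induction on the number of canalyzing pairs, evaluation of the inductive decomposition at the new input value $a_{k+1}$, use of the nonvanishing scalar $\prod_{j=1}^{k}(a_{k+1}-a_j)$ to force the restricted quotient to be a constant $A_k$, an appeal to Lemma \ref{lm3.1} to split off the factor $(x_i-a_{k+1})$, and back-substitution, with sufficiency checked by successive evaluation at $x_i=a_1,\dots,a_k$. Your explicit remark that the restriction of $Q_k$ must be a scalar because the displayed identity is an equality of polynomials in the remaining variables is a point the paper passes over more quickly, but the argument is the same.
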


\begin{proof}
For the necessity, we use induction principle.

For $k=1$, it is true by the definition and Lemma \ref{lm3.1}

$f\in\bigcap_{j=1}^{k}\mathcal{C}^{i}_{a_{j},b_{j}}$ implies $f\in
\bigcap_{j=1}^{k-1}\mathcal{C}^{i}_{a_{j},b_{j}}$, by the assumption, we have
$f=Q_{k-1}(X)\prod_{j=1}^{k-1}(x_{i}-a_{j})+A_{k-2}\prod_{j=1}^{k-2}%
(x_{i}-a_{j})+...+A_{1}(x_{i}-a_{1})+A_{0}$.

Since $f\in\mathcal{C}^{i}_{a_{k},b_{k}}$, we get

$f(x_{1},...,x_{i-1},a_{k},x_{i+1},...,x_{n})=Q_{k-1}\prod_{j=1}^{k-1}%
(a_{k}-a_{j})+A_{k-2}\prod_{j=1}^{k-2}(a_{k}-a_{j})+...+A_{1}(a_{k}%
-a_{1})+A_{0}=b_{k}$ for any $X=(x_{1},...,x_{i-1},a_{k},x_{i+1},...,x_{n})$.
In other words,

$Q_{k-1}(x_{1},...,x_{i-1},a_{k},x_{i+1},...,x_{n})=A_{k-1}$ for any
$X=(x_{1},...,x_{i-1},a_{k},x_{i+1},...,x_{n})$.

So $Q_{k-1}\in\mathcal{C}^{i}_{a_{k},A_{k-1}}$. By Lemma \ref{lm3.1}, we have
$Q_{k-1}=(x_{i}-a_{k})Q_{k}+A_{k-1}$,

$f=Q_{k-1}(X)\prod_{j=1}^{k-1}(x_{i}-a_{j})+A_{k-2}\prod_{j=1}^{k-2}%
(x_{i}-a_{j})+...+A_{1}(x_{i}-a_{1})+A_{0}=$

$=((x_{i}-a_{k})Q_{k}+A_{k-1})\prod_{j=1}^{k-1}(x_{i}-a_{j})+A_{k-2}%
\prod_{j=1}^{k-2}(x_{i}-a_{j})+...+A_{1}(x_{i}-a_{1})+A_{0}=$

$=Q_{k}(X)\prod_{j=1}^{k}(x_{i}-a_{j})+A_{k-1}\prod_{j=1}^{k-1}(x_{i}%
-a_{j})+...+A_{1}(x_{i}-a_{1})+A_{0}$.

We finish the proof of necessity.

When $x_{i}=a_{1}$, we have $f=A_{0}=b_{1}$, so $f\in\mathcal{C}^{i}%
_{a_{1},b_{1}}$.

When $x_{i}=a_{2}$, we set $f=A_{1}(a_{2}-a_{1})+A_{0}=b_{2}$, we get a unique
solution for $A_{1}$ such that $f\in\mathcal{C}^{i}_{a_{2},b_{2}}$...........

When $x_{i}=a_{k}$, we get a unique solution for $A_{k-1}$ such that
$f\in\mathcal{C}^{i}_{a_{k},b_{k}}$. In summary, $f\in\bigcap_{j=1}%
^{k}\mathcal{C}^{i}_{a_{j},b_{j}}$.
\end{proof}

From the above Lemma, we immediately obtain

\begin{lemma}
\label{lm3.8} $| \bigcap_{j=1}^{k}\mathcal{C}^{i}_{a_{j},b_{j}}%
|=q^{(q-k)q^{n-1}}$ given $\{a_{1},a_{2},...,a_{k}\}\subset\mathbb{F}$ and
$\{b_{1},b_{2},...,b_{k}\}\subset\mathbb{F}$
\end{lemma}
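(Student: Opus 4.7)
The plan is to reduce the enumeration directly to a coefficient count, exactly parallel to how Lemma \ref{lm3.4} is deduced from Lemma \ref{lm3.3}. By Lemma \ref{lm3.7}, any $f\in\bigcap_{j=1}^{k}\mathcal{C}^{i}_{a_{j},b_{j}}$ is of the form
\[
f=Q_{k}(X)\prod_{j=1}^{k}(x_{i}-a_{j})+A_{k-1}\prod_{j=1}^{k-1}(x_{i}-a_{j})+\ldots+A_{1}(x_{i}-a_{1})+A_{0},
\]
where the constants $A_{0},A_{1},\ldots,A_{k-1}$ are uniquely determined by the data $a_{1},\ldots,a_{k},b_{1},\ldots,b_{k}$, and where $Q_{k}$ is an arbitrary polynomial satisfying $\deg(Q_{k})_{i}\leq q-k-1$. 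The key point, already built into Lemma \ref{lm3.7}, is that this representation is a bijection: the $A_t$'s are forced, so the only free parameter is $Q_k$.

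First I would observe that distinct polynomials $Q_k$ (subject to $\deg(Q_k)_i\le q-k-1$ and $\deg(Q_k)_j\le q-1$ for $j\ne i$) give distinct $f$'s, because the map $Q_k\mapsto f$ is affine with injective linear part (one can recover $Q_k$ by dividing $f-\sum_{t=0}^{k-1}A_t\prod_{s=1}^{t}(x_i-a_s)$ by $\prod_{j=1}^{k}(x_i-a_j)$ in the reduced ring). Therefore the count of such $f$ equals the count of admissible $Q_k$.

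Next I would count the admissible $Q_k$ by writing out its ANF: the exponent on $x_i$ ranges over $\{0,1,\ldots,q-k-1\}$, giving $q-k$ choices, while each other variable $x_j$ ($j\ne i$) has $q$ choices of exponent in $\{0,1,\ldots,q-1\}$. So there are $(q-k)q^{n-1}$ monomials, each carrying an independent coefficient from $\mathbb{F}$. This yields $q^{(q-k)q^{n-1}}$ polynomials, establishing the claimed cardinality.

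There is no real obstacle here, since the hard work was done in Lemma \ref{lm3.7}; the only point requiring a sentence of justification is the injectivity of the parametrization by $Q_k$, i.e.\ the uniqueness of the decomposition, which follows from iterated long division by the pairwise coprime linear factors $x_i-a_j$ (equivalently, from the fact that the basis $\{1,(x_i-a_1),(x_i-a_1)(x_i-a_2),\ldots,\prod_{j=1}^{k}(x_i-a_j)\}$ times any monomial basis in $X_i$ is linearly independent over $\mathbb{F}$). Also worth noting as a sanity check: when $k=q$, one has $q-k=0$, so $Q_k$ has no nonconstant monomials in $x_i$ and the count reduces to $q^0=1$, consistent with the fact (as in the note after Lemma \ref{lm3.4}) that specifying $b_j$ at every possible input $a_j$ determines $f$ on the line in coordinate $i$, hence determines $f$ entirely along that line, and combined with the $A_t$ equations forces $f$ uniquely.
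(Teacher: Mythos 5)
Your proposal is correct and follows exactly the route the paper intends: the paper derives Lemma~\ref{lm3.8} ``immediately'' from Lemma~\ref{lm3.7} by noting that the $A_t$'s are forced and only $Q_k$ is free, with $(q-k)q^{n-1}$ independent coefficients. Your added remarks on the injectivity of the parametrization and the $k=q$ sanity check merely make explicit what the paper leaves implicit.
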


In order to evaluate $|\mathcal{C}^{i}_{*,*}|$,we need to generalize Lemma
\ref{lm3.7}.

To save space, we just focus on the cardinality in the following lemma.

\begin{lemma}
\label{lm3.9} $a_{11},a_{12},...a_{1k_{1}};a_{21},a_{22},...a_{2k_{2}%
,};.........;a_{r1},a_{r2},...a_{rk_{r}}$ are $k_{1}+k_{2}+...+k_{r}$ distinct
elements of $\mathbb{F}$, $\{b_{1},b_{2},...,b_{k}\}\subset\mathbb{F}$. Then

$|( \bigcap_{j=1}^{k_{1}}\mathcal{C}^{i}_{a_{1j},b_{1}})\bigcap( \bigcap
_{j=1}^{k_{2}}\mathcal{C}^{i}_{a_{2j},b_{2}})......\bigcap( \bigcap
_{j=1}^{k_{r}}\mathcal{C}^{i}_{a_{rj},b_{r}})|=q^{(q-k_{1}-k_{2}%
-...-k_{r})q^{n-1}}$
\end{lemma}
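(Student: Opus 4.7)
The plan is to reduce this statement directly to Lemma \ref{lm3.8} by flattening the double indexing. Let $k = k_1 + k_2 + \cdots + k_r$. By hypothesis the $k$ elements $a_{js}$ (for $1 \leq j \leq r$, $1 \leq s \leq k_j$) are pairwise distinct in $\mathbb{F}$, so first I would relabel them as a single list $\tilde a_1, \tilde a_2, \ldots, \tilde a_k$ of distinct elements of $\mathbb{F}$, and attach to each $\tilde a_s$ the output value $\tilde b_s := b_j$ whenever $\tilde a_s$ came from the $j$-th block. With this bookkeeping the intersection in the statement becomes, tautologically,
\[
\Bigl(\bigcap_{s=1}^{k_1} \mathcal{C}^{i}_{a_{1s},b_1}\Bigr) \cap \cdots \cap \Bigl(\bigcap_{s=1}^{k_r} \mathcal{C}^{i}_{a_{rs},b_r}\Bigr) \;=\; \bigcap_{s=1}^{k} \mathcal{C}^{i}_{\tilde a_s, \tilde b_s}.
\]
Then Lemma \ref{lm3.8} applied to the right-hand side yields cardinality $q^{(q-k)q^{n-1}} = q^{(q - k_1 - \cdots - k_r) q^{n-1}}$, which is exactly the desired formula.

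The one point requiring verification is that Lemma \ref{lm3.7} (and hence Lemma \ref{lm3.8}) remains valid when the outputs $\tilde b_s$ are allowed to repeat, which happens in our setting whenever some $k_j > 1$. Inspecting the inductive step of the proof of Lemma \ref{lm3.7}, the unknown $A_{t-1}$ is solved from an equation of the form $A_{t-1}\prod_{j=1}^{t-1}(a_t - a_j) + \cdots + A_0 = b_t$, which uses only distinctness of the $a$-values (so that the leading coefficient $\prod_{j=1}^{t-1}(a_t - a_j)$ is nonzero) and places no constraint whatsoever on the $b_j$. Thus the representation and count in Lemmas \ref{lm3.7} and \ref{lm3.8} extend verbatim to arbitrary output tuples.

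The main, and essentially only, obstacle is this observation that distinctness of the canalyzed output values is not required in the earlier lemmas. Once that check is in place, the present lemma collapses to a one-line relabeling, and no new combinatorics or algebra needs to be developed. If desired, one could alternatively give a self-contained proof by iterating the factorization of Lemma \ref{lm3.1}: peel off one condition $\langle i\!:\!a_{js}\!:\!b_j\rangle$ at a time, showing by induction that the number of surviving functions is multiplied by $q^{-q^{n-1}}$ at each step; but this is strictly weaker bookkeeping than the direct appeal to Lemma \ref{lm3.8}, so I would opt for the relabeling proof.
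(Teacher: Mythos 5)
Your proof is correct, and it takes a genuinely shorter route than the paper's. You flatten the block-indexed family into a single list of $k=k_1+\cdots+k_r$ distinct canalyzing inputs with (possibly repeated) attached outputs, and then invoke Lemma \ref{lm3.8} after checking that the inductive proof of Lemma \ref{lm3.7} uses only the distinctness of the $a$-values (to keep the leading coefficients $\prod_{j}(a_t-a_j)$ nonzero) and never the distinctness of the $b$-values. That check is the essential content, since the paper's statement of Lemmas \ref{lm3.7} and \ref{lm3.8} writes the outputs as a set $\{b_1,\ldots,b_k\}$, and in the only prior application (Theorem \ref{th4}) they are indeed distinct; your observation that the hypothesis is superfluous is accurate and is exactly what licenses the reduction. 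The paper instead re-runs the successive-division argument from scratch: it starts from Lemma \ref{lm3.3} applied to the first block to write $f=Q(X)\prod_{j=1}^{k_1}(x_i-a_{1j})+b_1$, then peels off one further condition $\mathcal{C}^{i}_{a_{21},b_2}$ at a time, each step replacing $Q$ by $(x_i-a_{21})Q_1+A_1$ and absorbing the determined part into a univariate polynomial $\bar{O}(x_i)$, until it reaches the representation $f=\bar{Q}\prod(x_i-a_{\cdot})+\bar{O}(x_i)$ with $\deg(\bar{Q})_i\leq q-k-1$ and $\bar{O}$ uniquely determined, whence the count $q^{(q-k)q^{n-1}}$. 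Both arguments rest on the same Newton-form interpolation structure; yours buys brevity by reusing Lemma \ref{lm3.8} at the cost of an explicit remark that its proof is insensitive to repeated outputs, while the paper's self-contained version also exhibits the canonical form of the functions in the intersection, which it reuses conceptually in Theorem \ref{th5}.
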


\begin{proof}
By Lemma \ref{lm3.3}, $f\in\bigcap_{j=1}^{k_{1}}\mathcal{C}^{i}_{a_{1j},b_{1}%
}$ iff $f=Q(X)\prod_{j=1}^{k_{1}}(x_{i}-a_{1j})+b_{1}$, $deg(Q)_{i}\leq
q-k_{1}-1$, i.e., we have

$\bigcap_{j=1}^{k_{1}}\mathcal{C}^{i}_{a_{1j},b_{1}}=\{Q(X)\prod_{j=1}^{k_{1}%
}(x_{i}-a_{1j})+b_{1}|\forall Q, deg(Q)_{i}\leq q-k_{1}-1\}$.

Let $f\in\bigcap_{j=1}^{k_{1}}\mathcal{C}^{i}_{a_{1j},b_{1}}$, then
$f=Q(X)\prod_{j=1}^{k_{1}}(x_{i}-a_{1j})+b_{1}$. If we also have
$f\in\mathcal{C}^{i}_{a_{21},b_{2}}$, let $x_{i}=a_{21}$, we get

$f(x_{1}...,x_{i-1},a_{21},x_{i+1},...,x_{n})=Q(x_{1}...,x_{i-1}%
,a_{21},x_{i+1},...,x_{n})\prod_{j=1}^{k_{1}}(a_{21}-a_{1j})+b_{1}=b_{2}$ for
any $(x_{1}...,x_{i-1},a_{21},x_{i+1},...,x_{n})$. The coefficient
$\prod_{j=1}^{k_{1}}(a_{21}-a_{1j})$ is nonzero, so we can solve for $Q$ and
get $Q\in\mathcal{C}^{i}_{a_{21},A_{1}}$ for some $A_{1} \in\mathbb{F}$. Hence
we can write

$Q=(x_{i}-a_{21})Q_{1}+A_{1}$, i.e., $f=(x_{i}-a_{21})Q_{1}\prod_{j=1}^{k_{1}%
}(x_{i}-a_{1j})+O(x_{i})$, where $O(x_{i})$ is a one variable polynomial whose
coefficients are completely determined by $a_{ij},b_{i}$, $deg(Q_{1}%
)_{i}=deg(Q)_{i}-1$, $deg(O(x_{i}))\leq k_{1}$.

Obviously, we can repeat the above process, to get

$f\in( \bigcap_{j=1}^{k_{1}}\mathcal{C}^{i}_{a_{1j},b_{1}})\bigcap(
\bigcap_{j=1}^{k_{2}}\mathcal{C}^{i}_{a_{2j},b_{2}})......\bigcap(
\bigcap_{j=1}^{k_{r}}\mathcal{C}^{i}_{a_{rj},b_{r}})$ iff

$f=\bar{Q}(\prod_{j=1}^{k_{1}}(x_{i}-a_{1j}))(\prod_{j=1}^{k_{2}}(x_{i}%
-a_{2j}))...(\prod_{j=1}^{k_{r}}(x_{i}-a_{rj}))+\bar{O}(x_{i})$. Where
$deg(\bar{Q})_{i}\leq q-k_{1}-k_{2}-...-k_{r}-1$ and $\bar{O}(x_{i})$ is a
uniquely determined polynomial of $x_{i}$ and $deg(\bar{O}(x_{i}))\leq
k_{1}+...+k_{r}-1$. Hence, we know the cardinality is $q^{(q-k_{1}%
-k_{2}-...-k_{r})q^{n-1}}$.
\end{proof}

Now, we are ready to find the cardinality of $\mathcal{C}^{i}_{*,*}$. We have

\begin{thm}
\label{th4}
\[
|\mathcal{C}^{i}_{*,*}|=q!\sum_{k=1}^{q}\frac{(-1)^{k-1}q^{(q-k)q^{n-1}}%
}{(q-k)!}\sum_{k_{1}+...k_{q}=k,0\leq k_{i}\leq q}\frac{1}{k_{1}%
!k_{2}!...k_{q}!}%
\]

\end{thm}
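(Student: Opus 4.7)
The plan is to apply Inclusion--Exclusion to the family $\{\mathcal{C}^i_{a,b} : a,b \in \mathbb{F}\}$, which has $q^2$ members and whose union is exactly $\mathcal{C}^{i}_{*,*}$. The key point that drastically simplifies the sum is Proposition \ref{prop1}: any intersection containing both $\mathcal{C}^i_{a,b_1}$ and $\mathcal{C}^i_{a,b_2}$ with $b_1 \neq b_2$ is empty. Therefore only subfamilies $s \subset \{\mathcal{C}^i_{a,b}\}$ in which each input $a$ is paired with at most one output $b$ contribute a nonzero term.

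Next I would parametrize such ``admissible'' subfamilies. Fix an enumeration $b_1,\ldots,b_q$ of $\mathbb{F}$, and for an admissible $s$ let $k_j$ denote the number of inputs $a$ assigned to output $b_j$. Then $k := |s| = k_1 + \cdots + k_q \le q$, and the number of admissible subfamilies with these multiplicities equals the number of ways to choose $k$ distinct $a$'s from $\mathbb{F}$ and partition them into labeled groups of sizes $k_1, \ldots, k_q$, namely $\binom{q}{k} \cdot \frac{k!}{k_1!\cdots k_q!} = \frac{q!}{(q-k)!\,k_1!\cdots k_q!}.$

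By Lemma \ref{lm3.9}, the intersection associated with any such admissible $s$ has cardinality $q^{(q-k)q^{n-1}}$, depending only on $k = \sum_j k_j$. Consequently
\begin{equation*}
N_k \;=\; \sum_{\substack{k_1+\cdots+k_q=k\\ 0\le k_j\le q}} \frac{q!}{(q-k)!\,k_1!\cdots k_q!}\, q^{(q-k)q^{n-1}},
\end{equation*}
and Inclusion--Exclusion gives $|\mathcal{C}^i_{*,*}| = \sum_{k=1}^{q}(-1)^{k-1}N_k$. Factoring out the $k$-dependent pieces yields the displayed formula after pulling $q!/(q-k)!$ outside the inner sum.

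The proof is essentially bookkeeping; the only genuine step that requires care is the counting in the second paragraph, where one must separate ``choose which inputs are involved'' from ``assign outputs to those inputs,'' and verify that the product of binomial and multinomial factors collapses to $q!/((q-k)!\,k_1!\cdots k_q!)$. Everything else is a direct invocation of Lemma \ref{lm3.9} and Proposition \ref{prop1}.
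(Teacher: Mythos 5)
Your proposal is correct and follows essentially the same route as the paper: inclusion--exclusion over the $q^2$ sets $\mathcal{C}^i_{a,b}$, using Proposition \ref{prop1} to discard subfamilies that repeat an input value and Lemma \ref{lm3.9} to evaluate the surviving intersections. The only cosmetic difference is in the bookkeeping of the coefficient, where the paper builds it as a telescoping product $C(q,k_1)C(q-k_1,k_2)\cdots$ while you choose the $k$ inputs first and then distribute them, both collapsing to $\frac{q!}{(q-k)!\,k_1!\cdots k_q!}$.
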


\begin{proof}
$\mathcal{C}^{i}_{*,*}=\bigcup_{b\in\mathbb{F}}\mathcal{C}^{i}_{*,b}%
=\bigcup_{b\in\mathbb{F}}\bigcap_{a\in\mathbb{F}}\mathcal{C}^{i}_{a,b}$.

Let $S_{i}=\{\mathcal{C}^{i}_{a,b}|a,b \in\mathbb{F}\}$, we get $|\mathcal{C}%
^{i}_{*,*}|=\sum_{k=1}^{q^{2}}(-1)^{k-1}N_{k}$. Where $N_{k}=\sum_{s\subset
S_{i},|s|=k}|\bigcap_{T\in s}T|$.

In order to evaluate $N_{k}$, we write all the elements in $S_{i}$ as the
following $q\times q$ matrix.

$\mathcal{C}^{i}_{a_{1},b_{1}} \mathcal{C}^{i}_{a_{2},b_{1}}...\mathcal{C}%
^{i}_{a_{q},b_{1}}$

$\mathcal{C}^{i}_{a_{1},b_{2}} \mathcal{C}^{i}_{a_{2},b_{2}}...\mathcal{C}%
^{i}_{a_{q},b_{2}}$

...........................

$\mathcal{C}^{i}_{a_{1},b_{q}} \mathcal{C}^{i}_{a_{2},b_{q}}...\mathcal{C}%
^{i}_{a_{q},b_{q}}$

For any $s\subset S_{i}$ with $|s|=k$, we will chose $k$ elements from the
above matrix to form $s$.

Suppose $k_{1}$ of it elements are from the first row (there are $C(q,k_{1})$
many ways to do so). Let these $k_{1}$ elements be $\mathcal{C}_{a_{11},b_{1}%
}^{i},\mathcal{C}_{a_{12},b_{1}}^{i},...,\mathcal{C}_{a_{1{k_{1}}},b_{1}}^{i}$.

Suppose $k_{2}$ of its elements are from the second row, we must chose these
elements from different columns, otherwise the intersection will be $\phi$ by
Proposition \ref{prop1} (there are $C(q-k_{1},k_{2})$ many ways to do so). Let
these $k_{2}$ elements be $\mathcal{C}_{a_{21},b_{2}}^{i},\mathcal{C}%
_{a_{22},b_{2}}^{i},...,\mathcal{C}_{a_{2{k_{2}}},b_{2}}^{i}$

.............................

Suppose $k_{q}$ of its elements are from the last row (there are
$C(q-k_{1}-k_{2}-...-k_{q-1},k_{q})$ many ways to do so). Let these $k_{q}$
elements be $\mathcal{C}_{a_{q1},b_{q}}^{i},\mathcal{C}_{a_{q2},b_{q}}%
^{i},...,\mathcal{C}_{a_{q{k_{q}}},b_{q}}^{i}$.

$k_{1}+k_{2}+...+k_{q}=k$, $0\leq k_{i}\leq q$, $i=1,2,...,q$.

$N_{k}=\sum_{s\subset S_{i},|s|=k}|\bigcap_{T\in s}T|$

$=\sum_{k_{1}+...+k_{q}=k,0\leq k_{i}\leq q}C(q,k_{1})C(q-k_{1},k_{2}%
)...C(q-k_{1}-...-k_{q-1},k_{q})I_{k_{1}k_{2}...k_{q}}$, where

$I_{k_{1}k_{2}...k_{q}}=|( \bigcap_{j=1}^{k_{1}}\mathcal{C}^{i}_{a_{1j},b_{1}%
})\bigcap( \bigcap_{j=1}^{k_{2}}\mathcal{C}^{i}_{a_{2j},b_{2}})......\bigcap(
\bigcap_{j=1}^{k_{q}}\mathcal{C}^{i}_{a_{qj},b_{q}})|$.

By Lemma \ref{lm3.9}, we know $I_{k_{1}k_{2}...k_{q}}=q^{(q-k_{1}%
-k_{2}-...-k_{q})q^{n-1}}=q^{(q-k)q^{n-1}}$, this number is zero if $k>q$.

A straightforward computation shows that

$C(q,k_{1})C(q-k_{1},k_{2})...C(q-k_{1}-...-k_{q-1},k_{q})=\frac{q!}%
{k_{1}!k_{2}!...k_{q}!(q-k)!}$.

Hence, we get $|\mathcal{C}^{i}_{*,*}|=\sum_{k=1}^{q^{2}}(-1)^{k-1}N_{k}%
=\sum_{k=1}^{q}(-1)^{k-1}N_{k}=$%

\[
=\sum_{k=1}^{q}(-1)^{k-1}\sum_{k_{1}+...k_{q}=k,0\leq k_{i}\leq q}\frac
{q!}{k_{1}!k_{2}!...k_{q}!(q-k)!}q^{(q-k)q^{n-1}}=
\]

\[
q!\sum_{k=1}^{q}\frac{(-1)^{k-1}q^{(q-k)q^{n-1}}}{(q-k)!}\sum_{k_{1}%
+...k_{q}=k,0\leq k_{i}\leq q}\frac{1}{k_{1}!k_{2}!...k_{q}!}%
\]

\end{proof}

Now we begin to evaluate $|\mathcal{C}^{*}_{*,*}|$. We have

\begin{thm}
\label{th5}
\[
|\mathcal{C}^{*}_{*,*}|=\sum_{k=1}^{q}(-1)^{k-1}U_{k}+\sum_{k=1}%
^{nq}(-1)^{k-1}V_{k}.
\]
where
\[
U_{k}=n\sum_{t_{1}+t_{2}+...+t_{q}=k,0\leq t_{i}\leq q}\frac{q!}{t_{1}%
!t_{2}!...t_{q}!(q-k)!}q^{(q-k)q^{n-1}}=
\]%
\[
\frac{nq!}{(q-k)!}q^{(q-k)q^{n-1}}\sum_{t_{1}+t_{2}+...+t_{q}=k,0\leq
t_{i}\leq q}\frac{1}{t_{1}!t_{2}!...t_{q}!}%
\]

\[
V_{k}=q\sum_{k_{1}+...+k_{n}=k, 0\leq k_{i}\leq k-1,0\leq k_{i}\leq q}%
(\prod^{n}_{j=1}C(q,k_{j}))q^{\prod^{n}_{j=1}(q-k_{j})}%
\]

\end{thm}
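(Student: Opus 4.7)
The plan is to apply inclusion-exclusion to the finest decomposition
\[
\mathcal{C}^{*}_{*,*} = \bigcup_{(i,a,b) \in [n] \times \mathbb{F} \times \mathbb{F}} \mathcal{C}^{i}_{a,b},
\]
so that
\[
|\mathcal{C}^{*}_{*,*}| = \sum_{T \neq \emptyset} (-1)^{|T|-1} \Bigl|\bigcap_{(i,a,b) \in T} \mathcal{C}^{i}_{a,b}\Bigr|,
\]
with the sum effectively restricted to index subsets $T$ whose intersection is nonempty. The first task is to identify exactly which $T$ contribute. Using Propositions \ref{prop1} and \ref{prop2}, I claim the intersection is nonempty only if either (I) every triple in $T$ shares a common variable index $i$, or (II) every triple in $T$ shares a common canalyzed value $b$. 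Indeed, if $T$ contains two triples $(i_1,a_1,b_1)$ and $(i_2,a_2,b_2)$ with $i_1 \neq i_2$ and $b_1 \neq b_2$, then Proposition \ref{prop2} forces the intersection to be empty, leaving (I) or (II) as the only surviving possibilities.

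Let $\mathcal{T}_{I}$ and $\mathcal{T}_{II}$ denote these two families of subsets; their intersection $\mathcal{T}_{III}$ consists of those $T$ sharing both a common $i$ and a common $b$. Applying the two-set inclusion-exclusion $|X \cup Y| = |X| + |Y| - |X \cap Y|$ at the level of these families of index subsets gives
\[
|\mathcal{C}^{*}_{*,*}| = \Sigma_{I} + \Sigma_{II} - \Sigma_{III},
\]
where $\Sigma_{\bullet} = \sum_{T \in \mathcal{T}_{\bullet}} (-1)^{|T|-1} \bigl|\bigcap_{(i,a,b) \in T} \mathcal{C}^{i}_{a,b}\bigr|$. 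Splitting $\Sigma_{I}$ according to the shared $i$, the inner sum for each fixed $i$ is exactly the inclusion-exclusion computation of $|\mathcal{C}^{i}_{*,*}|$ carried out in the proof of Theorem \ref{th4}, giving $\Sigma_{I} = n\,|\mathcal{C}^{i}_{*,*}|$. Splitting $\Sigma_{II}$ over the shared $b$ gives $\Sigma_{II} = q\,|\mathcal{C}^{*}_{*,b}|$ by Theorem \ref{th3}, and splitting $\Sigma_{III}$ over the shared pair $(i,b)$ gives $\Sigma_{III} = nq\,|\mathcal{C}^{i}_{*,b}|$ by Theorem \ref{th1}. Thus $|\mathcal{C}^{*}_{*,*}| = n|\mathcal{C}^{i}_{*,*}| + q|\mathcal{C}^{*}_{*,b}| - nq|\mathcal{C}^{i}_{*,b}|$.

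The last step is to match this clean expression to the stated $U_k, V_k$ form. The identity $n|\mathcal{C}^{i}_{*,*}| = \sum_{k=1}^{q}(-1)^{k-1}U_k$ is an immediate consequence of Theorem \ref{th4}. For the remaining part, I would expand $q|\mathcal{C}^{*}_{*,b}|$ via Theorem \ref{th3} and observe that the tuples $(k_1,\ldots,k_n)$ violating $k_j \leq k-1$ in $V_k$ are exactly those with some $k_j = k$ and all other $k_l = 0$. The total contribution of these excluded tuples is $q \sum_{k=1}^{q}(-1)^{k-1} n\,C(q,k)\,q^{(q-k)q^{n-1}} = nq\,|\mathcal{C}^{i}_{*,b}|$ by Theorem \ref{th1}. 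Hence subtracting $nq|\mathcal{C}^{i}_{*,b}|$ imposes precisely the constraint appearing in $V_k$, yielding $q|\mathcal{C}^{*}_{*,b}| - nq|\mathcal{C}^{i}_{*,b}| = \sum_{k=1}^{nq}(-1)^{k-1}V_k$ and completing the proof.

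The main obstacle in my view is the clean classification of nonempty intersections into types I and II (so that the outer inclusion-exclusion really does reduce to two overlapping families), together with the careful observation that the overlap $\mathcal{T}_{III}$ is itself an inclusion-exclusion computing $|\mathcal{C}^{i}_{*,b}|$. Once this structural setup is correct, the recovery of $U_k$ and $V_k$ is a routine substitution using Theorems \ref{th1}, \ref{th3}, and \ref{th4}.
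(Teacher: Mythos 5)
Your proposal is correct, and its structural core --- inclusion--exclusion over the full index family $\{\mathcal{C}^{i}_{a,b}\}$ followed by the observation (via Propositions \ref{prop1} and \ref{prop2}) that a nonempty intersection forces either a common variable index or a common canalyzed value --- is exactly the paper's. Where you diverge is in the bookkeeping after that classification. The paper partitions the surviving index subsets into two \emph{disjoint} classes (all $k$ elements in one matrix $M_i$, versus all in one column spanning at least two matrices) and recomputes the multinomial counts for each class from scratch via Lemmas \ref{lm3.9} and \ref{lm3.6}, arriving at $N_k=U_k+V_k$. You instead keep the two overlapping families, apply $|X\cup Y|=|X|+|Y|-|X\cap Y|$ at the level of families of index subsets, and recognize each of the three resulting sums as an already-completed inclusion--exclusion, yielding the compact intermediate identity $|\mathcal{C}^{*}_{*,*}|=n|\mathcal{C}^{i}_{*,*}|+q|\mathcal{C}^{*}_{*,b}|-nq|\mathcal{C}^{i}_{*,b}|$, which does not appear explicitly in the paper. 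This buys modularity (Theorems \ref{th1}, \ref{th3}, \ref{th4} are reused as black boxes, and your identity makes the asymptotics of Theorem \ref{th3.19} nearly immediate) at the cost of one extra step to massage the result back into the stated $U_k,V_k$ form; your matching of the excluded tuples (some $k_j=k$, the rest zero) to $nq|\mathcal{C}^{i}_{*,b}|$ is correct. The only point you gloss over --- as does the paper --- is the contrapositive of the classification: if $T$ has neither a common $i$ nor a common $b$, one must actually exhibit two triples differing in \emph{both} coordinates before Proposition \ref{prop2} applies; this is a one-line case check worth including.
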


\begin{proof}
First $\mathcal{C}^{*}_{*,*}=\bigcup_{i\in[n]}\bigcup_{a\in\mathbb{F}}%
\bigcup_{b\in\mathbb{F}}\mathcal{C}^{i}_{a,b}$, Let $S=\{\mathcal{C}^{i}%
_{a,b}|i\in[n], a,b \in F\}$,

then $|\mathcal{C}^{*}_{*,*}|=\sum_{k=1}^{nq^{2}}(-1)^{k-1}N_{k}$, where
$N_{k}=\sum_{s\subset S,|s|=k}|\bigcap_{T\in s}T|$.

We write all the $nq^{2}$ elements of $S$ as the following $n$ many $q\times
q$ matrices.

$\mathcal{C}^{1}_{a_{1},b_{1}}\mathcal{C}^{1}_{a_{1},b_{2}}...\mathcal{C}%
^{1}_{a_{1},b_{q}}$

$\mathcal{C}^{1}_{a_{2},b_{1}}\mathcal{C}^{1}_{a_{2},b_{2}}...\mathcal{C}%
^{1}_{a_{2},b_{q}}$

..........................

$\mathcal{C}^{1}_{a_{q},b_{1}}\mathcal{C}^{1}_{a_{q},b_{2}}...\mathcal{C}%
^{1}_{a_{q},b_{q}}$

We call this matrix $M_{1}$.

$\mathcal{C}^{2}_{a_{1},b_{1}}\mathcal{C}^{2}_{a_{1},b_{2}}...\mathcal{C}%
^{2}_{a_{1},b_{q}}$

$\mathcal{C}^{2}_{a_{2},b_{1}}\mathcal{C}^{2}_{a_{2},b_{2}}...\mathcal{C}%
^{2}_{a_{2},b_{q}}$

..........................

$\mathcal{C}^{2}_{a_{q},b_{1}}\mathcal{C}^{2}_{a_{q},b_{2}}...\mathcal{C}%
^{2}_{a_{q},b_{q}}$

We call this matrix $M_{2}$.

.........................

.........................

.........................

$\mathcal{C}^{n}_{a_{1},b_{1}}\mathcal{C}^{n}_{a_{1},b_{2}}...\mathcal{C}%
^{n}_{a_{1},b_{q}}$

$\mathcal{C}^{n}_{a_{2},b_{1}}\mathcal{C}^{n}_{a_{2},b_{2}}...\mathcal{C}%
^{n}_{a_{2},b_{q}}$

..........................

$\mathcal{C}^{n}_{a_{q},b_{1}}\mathcal{C}^{n}_{a_{q},b_{2}}...\mathcal{C}%
^{n}_{a_{q},b_{q}}$

We call this matrix $M_{n}$.

We combine all the above $M_{i},i=1,2,...,n.$ to form a $nq\times q$ matrix
$M$ whose first $q$ rows are $M_{1}$, the second $q$ rows are $M_{2}$,..., the
last $q$ rows are $M_{n}$.

We are going to chose $k$ elements from $M$ to form the intersection. In order
to get a possible non empty intersection, we know all these $k$ elements must
come from either the same $M_{i}$(for some fixed $i$) or all of them from the
same column of $M$ by Proposition \ref{prop2}. Inside the fixed $M_{i}$, each
elements must come from different rows by Proposition \ref{prop1}.

Hence, a typical intersection either looks like the one in Lemma \ref{lm3.9}
or the one in Lemma \ref{lm3.5}. But these two cases are not disjoint.

Suppose we chose $k_{i}$ elements from $M_{i}$, $i=1,2,...,n$, $k_{1}%
+k_{2}+...+k_{n}=k$, $0\leq k_{i}\leq k$, $i=1,2,...,n$.

If there exist $i$ such that $k_{i}=k$, then $k_{j}=0,\forall j\neq i$. This
implies the intersection looks like the one in Lemma \ref{lm3.9} and $k\leq q$.

If $0\leq k_{i}\leq k-1, \forall i\in[n]$, then the intersection looks like
the one in Lemma \ref{lm3.5} and $k\leq nq$.

The above two cases are disjoint now. By Lemma \ref{lm3.9} and Lemma
\ref{lm3.6}, we get
\[
N_{k}=\sum_{s\subset S,|s|=k}|\bigcap_{T\in s}T|=\sum_{k_{1}+...+k_{n}=k,0\leq
k_{i}\leq k}=\sum_{\exists i,k_{i}=k}+\sum_{k_{i}\leq k-1,i=1,...n}%
=U_{k}+V_{k}%
\]
where
\[
U_{k}=n\sum_{t_{1}+t_{2}+...+t_{q}=k,0\leq t_{i}\leq q}\frac{q!}{t_{1}%
!t_{2}!...t_{q}!(q-k)!}q^{(q-k)q^{n-1}}=
\]%
\[
\frac{nq!}{(q-k)!}q^{(q-k)q^{n-1}}\sum_{t_{1}+t_{2}+...+t_{q}=k,0\leq
t_{i}\leq q}\frac{1}{t_{1}!t_{2}!...t_{q}!}%
\]

\[
V_{k}=q\sum_{k_{1}+...+k_{n}=k, 0\leq k_{i}\leq k-1,0\leq k_{i}\leq q}%
(\prod^{n}_{j=1}C(q,k_{j}))q^{\prod^{n}_{j=1}(q-k_{j})}%
\]

Hence,
\[
|\mathcal{C}^{*}_{*,*}|=\sum^{nq^{2}}_{k=1}(-1)^{k-1}N_{k}=\sum^{nq^{2}}%
_{k=1}(-1)^{k-1}(U_{k}+V_{k})=\sum^{q}_{k=1}(-1)^{k-1}U_{k}+\sum^{nq}%
_{k=1}(-1)^{k-1}V_{k},
\]

\end{proof}

In the following, we will reduce the formula $|\mathcal{C}^{*}_{*,*}|$ when
$q=2$ and compare it with the one in \cite{Win}.%

\[
|\mathcal{C}^{*}_{*,*}|=\sum_{k=1}^{2}(-1)^{k-1}U_{k}+\sum_{k=1}%
^{2n}(-1)^{k-1}V_{k}.
\]
where%

\[
U_{k}=n\sum_{t_{1}+t_{2}=k, 0\leq t_{i}\leq2}\frac{2!}{t_{1}!t_{2}%
!(2-k)!}2^{(2-k)2^{n-1}}%
\]

\[
V_{k}=2\sum_{k_{1}+...+k_{n}=k, 0\leq k_{i}\leq k-1,0\leq k_{i}\leq2}%
(\prod^{n}_{j=1}C(2,k_{j}))2^{\prod^{n}_{j=1}(2-k_{j})}%
\]
A simple calculation shows that $U_{1}=4n2^{2^{n-1}}=C(n,1)2^{2}2^{2^{n-1}}$
and $U_{2}=4n$.

$V_{1}=0$ since the condition of the sum is not satisfied.%

\[
V_{2}=2\sum_{k_{1}+...+k_{n}=2, 0\leq k_{i}\leq1}(\prod^{n}_{j=1}%
C(2,k_{j}))2^{\prod^{n}_{j=1}(2-k_{j})}=C(n,2)2^{3}2^{2^{n-2}}%
\]
When $3\leq k\leq2n$%

\[
V_{k}=2\sum_{k_{1}+...+k_{n}=k,0\leq k_{i}\leq2}(\prod_{j=1}^{n}%
C(2,k_{j}))2^{\prod_{j=1}^{n}(2-k_{j})}%
\]%
\[
=C(n,k)2^{k+1}2^{2^{n-k}}+\sum_{1\leq t\leq\llcorner\frac{k}{2}\lrcorner
}C(n,t)C(n-t,k-2t)2^{k-2t+1}%
\]
Hence, when $q=2$,%

\[
|\mathcal{C}_{\ast,\ast}^{\ast}|=-4n+\sum_{1\leq k\leq n}(-1)^{k+1}%
C(n,k)2^{k+1}2^{2^{n-k}}%
\]%
\[
+\sum_{3\leq k\leq2n}(-1)^{k-1}\sum_{1\leq t\leq\llcorner\frac{k}{2}\lrcorner
}C(n,t)C(n-t,k-2t)2^{k-2t+1}%
\]
When $n$=1, 2, 3, 4, one can obtain (without calculator) the sequence 4, 14,
120, 3514. These results are consistent with those in \cite{Win}. By
\cite{Win}, the cardinality of $\mathcal{C}_{\ast,\ast}^{\ast}$ should be
\[
|\mathcal{C}_{\ast,\ast}^{\ast}|=2((-1)^{n}-n)+\sum_{1\leq k\leq n}%
(-1)^{k+1}C(n,k)2^{k+1}2^{2^{n-k}}.
\]
So, we obtain the following combinatorial identity (for any positive integer
$n$).%

\[
\sum_{3\leq k\leq2n}(-1)^{k-1}\sum_{1\leq t\leq\llcorner\frac{k}{2} \lrcorner
}C(n,t)C(n-t,k-2t)2^{k-2t+1}=2((-1)^{n}+n)
\]
The left sum should be explained as $0$ if $n=1$. As usual, $C(n,k)$ is 0 if
$k>n$.

For general $q$, from Lemma \ref{lm3.2}, we know $|\mathcal{C}^{i}%
_{a,b}|=q^{q^{n}-q^{n-1}}$, since $\mathcal{C}^{*}_{*,*}=\bigcup_{i\in
[n]}\bigcup_{a\in\mathbb{F}}\bigcup_{b\in\mathbb{F}}\mathcal{C}^{i}_{a,b}$, we
obtain $|\mathcal{C}^{*}_{*,*}|\leq nq^{2}q^{(q-1)q^{n-1}}$.

In order to get an intuitive idea about the magnitude of all the cardinality
numbers, We will find their asymptote as $n\to\infty$ or $q\to\infty$.

We have the following notation

\begin{defn}
$f(x)\overset{x}{\cong} g(x)$ if $\lim_{x\to\infty}\frac{f(x)}{g(x)}=1$.
\end{defn}

Now, we can list all the cardinalities asymptotically, we have

\begin{thm}
\label{th3.19}
\[
|\mathcal{C}^{i}_{a,b}|=q^{(q-1)q^{n-1}};
\]

\[
|\mathcal{C}^{i}_{a,*}|=qq^{(q-1)q^{n-1}};
\]

\[
|\mathcal{C}^{i}_{*,b}|\overset{n}{\cong}qq^{(q-1)q^{n-1}}, |\mathcal{C}%
^{i}_{*,b}|\overset{q}{\cong}qq^{(q-1)q^{n-1}};
\]

\[
|\mathcal{C}^{*}_{a,b}|\overset{n}{\cong}nq^{(q-1)q^{n-1}}, |\mathcal{C}%
^{*}_{a,b}|\overset{q}{\cong}nq^{(q-1)q^{n-1}};
\]

\[
|\mathcal{C}^{*}_{a,*}|\overset{n}{\cong}nqq^{(q-1)q^{n-1}}, |\mathcal{C}%
^{*}_{a,*}|\overset{q}{\cong}nqq^{(q-1)q^{n-1}};
\]

\[
|\mathcal{C}^{*}_{*,b}|\overset{n}{\cong}nqq^{(q-1)q^{n-1}}, |\mathcal{C}%
^{*}_{*,b}|\overset{q}{\cong}nqq^{(q-1)q^{n-1}};
\]

\[
|\mathcal{C}^{i}_{*,*}|\overset{n}{\cong}q^{2}q^{(q-1)q^{n-1}}, |\mathcal{C}%
^{i}_{*,*}|\overset{q}{\cong}q^{2}q^{(q-1)q^{n-1}};
\]

\[
|\mathcal{C}^{*}_{*,*}|\overset{n}{\cong}nq^{2}q^{(q-1)q^{n-1}},
|\mathcal{C}^{*}_{*,*}|\overset{q}{\cong}nq^{2}q^{(q-1)q^{n-1}};
\]

\end{thm}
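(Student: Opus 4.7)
The first two equalities are exact and already established in Lemma~\ref{lm3.2} and Corollary~\ref{Co1}. The identity $|\mathcal{C}^*_{a,*}| = q|\mathcal{C}^*_{a,b}|$ from Corollary~\ref{co2} reduces the asymptote for $\mathcal{C}^*_{a,*}$ to the one for $\mathcal{C}^*_{a,b}$, so only five asymptotic equivalences really need to be proved. The plan is to handle each of them by the same template: start from the closed-form expression in the appropriate theorem, isolate the leading term, and show that every other term is negligible relative to it in both limits.

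For $|\mathcal{C}^i_{*,b}|$, I would first expand Theorem~\ref{th1} by the binomial theorem to rewrite
\[
|\mathcal{C}^i_{*,b}| = \sum_{k=1}^{q}(-1)^{k-1} C(q,k)\, q^{(q-k)q^{n-1}},
\]
so that the leading term is $q\,q^{(q-1)q^{n-1}}$ and the ratio of the $k$th summand to the first is $C(q-1,k-1)\, q^{-(k-1)q^{n-1}}$, which vanishes as $n\to\infty$ for every fixed $q\geq 2$, and (for fixed $n\geq 2$) also as $q\to\infty$. The same kind of comparison applied to Theorem~\ref{th2} gives leading term $n\,q^{(q-1)q^{n-1}}$ for $|\mathcal{C}^*_{a,b}|$, with next-term ratio $\tfrac{n-1}{2}\,q^{-(q-1)q^{n-2}}$, again negligible in both limits (with $n\geq 2$ for the $q$-limit).

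The three remaining cases use the more intricate inclusion-exclusion formulas from Theorems~\ref{th3}, \ref{th4}, \ref{th5}. In each, the outer sum is a signed alternating sum over $k$, and the $k=1$ contribution already gives the leading term: in Theorem~\ref{th3} it is $nq\,q^{(q-1)q^{n-1}}$ (one of the $n$ multi-indices with a single $k_j = 1$); in Theorem~\ref{th4} it is $q^2\,q^{(q-1)q^{n-1}}$ (the $q$ rank-one partitions of $1$ into $q$ parts, together with the prefactor $q!/(q-1)!$); and in Theorem~\ref{th5} one has $V_1=0$ and $U_1 = nq^2\,q^{(q-1)q^{n-1}}$. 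In every other summand the exponent of $q$ is strictly smaller than $(q-1)q^{n-1}$, while the multinomial prefactors are bounded polynomially in $n$ and by $q!$ in $q$.

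The main technical obstacle is to quantify this dominance uniformly: one must check that the double-exponential gap between the leading exponent $(q-1)q^{n-1}$ and any competing exponent (the largest competitors being $(q-2)q^{n-1}$ or $(q-1)^2 q^{n-2}$) is at least a factor of $q^{q^{n-1}}$ or $q^{(q-1)q^{n-2}}$, which grows without bound in both limits and dwarfs every combinatorial prefactor $C(n,k)$, $q!$, or multinomial coefficient that appears. This amounts to a short but careful case check in each of the five asymptotic claims, with the understanding that the proviso $n\geq 2$ is needed in the $q\to\infty$ direction to keep the gap nondegenerate (for $n=1$ the $k=2$ correction in $|\mathcal{C}^i_{*,b}|$ already contributes at the same exponential scale as the leading term).
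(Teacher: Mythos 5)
Your proposal follows essentially the same route as the paper: extract the $k=1$ term of each inclusion--exclusion formula as the leading term ($U_{1}=nq^{2}q^{(q-1)q^{n-1}}$, $V_{1}=0$ in the hardest case) and kill the remaining terms by the double-exponential gap between $(q-1)q^{n-1}$ and the next-largest exponents $(q-2)q^{n-1}$ and $(q-1)^{2}q^{n-2}$, which swamps every binomial or multinomial prefactor; the paper writes this out only for $|\mathcal{C}^{*}_{*,*}|$ and declares the rest ``similar and easier,'' whereas you run the template through all cases. Your proviso that $n\geq 2$ is needed for the $q\to\infty$ statements is a genuine and correct observation the paper omits: for $n=1$ one has $|\mathcal{C}^{i}_{*,b}|=q^{q}-(q-1)^{q}\sim(1-e^{-1})q^{q}$ and $|\mathcal{C}^{i}_{*,*}|=q^{q}$ (every one-variable function is canalyzing), neither of which matches the stated asymptote, so the theorem as written implicitly requires $n\geq 2$ in the $q$-direction.
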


\begin{proof}

The first two rows are previous lemma and corollary.

We will give a proof for the last row, the others are similar and easier.%

\[
|\mathcal{C}^{*}_{*,*}|=\sum^{q}_{k=1}(-1)^{k-1}U_{k}+\sum^{nq}_{k=1}%
(-1)^{k-1}V_{k}.
\]
\[
U_{1}=\frac{nq!}{(q-1)!}q^{(q-1)q^{n-1}}\sum_{t_{1}+t_{2}+...+t_{q}=1, 0\leq
t_{i}\leq q}\frac{1}{t_{1}!t_{2}!...t_{q}!}=nq^{2}q^{(q-1)q^{n-1}}.
\]
When $2\leq k\leq q$, we have
\[
U_{k}=\frac{nq!}{(q-k)!}q^{(q-k)q^{n-1}}\sum_{t_{1}+t_{2}+...+t_{q}=k, 0\leq
t_{i}\leq q}\frac{1}{t_{1}!t_{2}!...t_{q}!}\leq nq!q^{(q-2)q^{n-1}}\sum_{0\leq
t_{i}\leq q,i=1,2,...,q}1
\]
\[
=nq!q^{(q-2)q^{n-1}}(q+1)^{q}.
\]
So, $\lim_{n\to\infty}\frac{U_{k}}{U_{1}}=0$ for $2\leq k\leq q$.

$V_{1}=0$ since the condition of the sum is not satisfied.

When $2\leq k\leq nq$, we have
\[
V_{k}=q\sum_{k_{1}+...+k_{n}=k, 0\leq k_{i}\leq k-1,0\leq k_{i}\leq q}%
(\prod^{n}_{j=1}C(q,k_{j}))q^{\prod^{n}_{j=1}(q-k_{j})}\leq q\sum_{0\leq
k_{i}\leq q,i=1,2,...n}(nq!)q^{(q-1)^{2}q^{n-2}}%
\]
\[
=q(q+1)^{n}nq!q^{(q-1)^{2}q^{n-2}}.
\]
Hence,
\[
|\sum^{nq}_{k=1}(-1)^{k-1}V_{k}|\leq nqq(q+1)^{n}nq!q^{(q-1)^{2}q^{n-2}}.
\]

We obtain
\[
\lim_{n\to\infty}\frac{|\sum^{nq}_{k=1}(-1)^{k-1}V_{k}|}{U_{1}}\leq\lim
_{n\to\infty}\frac{nqq(q+1)^{n}nq!q^{(q-1)^{2}q^{n-2}}}{nq^{2}q^{(q-1)q^{n-1}%
}}=\lim_{n\to\infty}\frac{(q+1)^{n}nq!}{q^{(q-1)q^{n-2}}}=0.
\]
In summary, we obtain
\[
\lim_{n\to\infty}\frac{|\mathcal{C}^{*}_{*,*}|}{nq^{2}q^{(q-1)q^{n-1}}}=1
\]
In other words,%

\[
|\mathcal{C}^{*}_{*,*}|\overset{n}{\cong}nq^{2}q^{(q-1)q^{n-1}}.
\]

From the above proof, it is also clear that we have%

\[
\lim_{q\to\infty}\frac{|\mathcal{C}^{*}_{*,*}|}{nq^{2}q^{(q-1)q^{n-1}}}=1
\]

In other words,%

\[
|\mathcal{C}^{*}_{*,*}|\overset{q}{\cong}nq^{2}q^{(q-1)q^{n-1}}.
\]

\end{proof}

When $q=2$, the first part of the last row in the above theorem has been
obtained in \cite{Win}.

\section{Conclusion}

\label{sec4}

In this paper, we generalized the definition of Boolean canalyzing functions
to the functions over general finite fields $\mathbb{F}_{q}$. We obtain clear
characterization for all eight classes of canalyzing functions. Using
Inclusion and Exclusion Principle, we also obtain eight formulas for the
cardinality of these classes. The main idea is from \cite{Jar1} and
\cite{Win}. Actually, the characterization is motivated from a simple lemma in
\cite{Jar1}. The enumeration idea is a natural extension of \cite{Win}. By
specifying our results to the case $q=2,$we obtain the formula in \cite{Win},
and derive an interesting combinatorial identity. Finally, for a better
understanding to the magnitudes, we provide all the eight asymptotes of these
cardinalities as either $n\rightarrow\infty$ or $q\rightarrow\infty$.

$Acknowledgments$

This work was initiated when the first and the third authors visited Virginia
Bioinformatics Institute at Virginia Tech in June 2010. We thank Alan
Veliz-Cuba and Franziska Hinkelmann for many useful discussions. The first and
the third authors thank Professor Reinhard Laubenbacher for his hospitality
and for introducing them to Discrete Dynamical Systems. The third author was
supported in part by a Research Initiation Program (RIP) award at
Winston-Salem State University.

\end{document}